\theoremstyle{plain}
\newtheorem{theorem}[subsection]{Theorem}
\newtheorem{lemma}[subsection]{Lemma}
\newtheorem{sublemma}[subsection]{Sublemma}
\newtheorem{corollary}[subsection]{Corollary}
\theoremstyle{thmrm}
\newcommand \ZZ {{\mathbb Z}}
\newcommand \PR {{\mathbb P}}
\newcommand \QQ {{\mathbb Q}}
\newcommand \bcL {{\mathscr L}}
\newcommand \bcU {{\mathscr U}}
\newcommand \bcV {{\mathscr V}}
\newcommand \Pic {{\rm {Pic}}}
\newcommand \Sym {{\rm {Sym}}}
\newcommand \pr {{\rm {pr}}}
\newcommand \ev {{\it {ev}}}
\newcommand \im {{\rm im}}
\newcommand \Hom {{\rm Hom}}
\newcommand \id {{\rm {id}}}
\newcommand \CH {{\it {CH}}}
\newcommand \wt {\widetilde }
\newcommand \Th {\Theta}
\begin{document}

\title[Zero cycles onn Prym varieties]{Zero cycles on Prym varieties}
\author{ Kalyan Banerjee}

\address{VIT Chennai, India}

\email{kalyan.banerjee@vit.ac.in
}

\footnotetext{Mathematics Classification Number: 14C25, 14D05, 14D20,
 14D21}
\footnotetext{Keywords: Pushforward homomorphism, Theta divisor, Jacobian varieties, Chow groups, higher Chow groups.}
\begin{abstract}
In this text we prove that if an abelian variety $A$ admits  an embedding into the Jacobian of a smooth projective curve $C$, and if we consider $\Th_A$ to be the divisor $\Th_C\cap A$, where $\Th_C$ denotes the theta divisor of $J(C)$, then the embedding of $\Th_A$ into $A$ induces an injective push-forward homomorphism (under certain conditions) at the level of Chow groups. We show that this is the case for every Prym varietiy arising from an unramified double cover of smooth projective curves. As a consequence that there does not exist a universal codimension two cycle on the product of a very general cubic threefold and the Prym variety associated to it. Hence we conclude that a very general cubic threefold is stably irrational.
\end{abstract}

\maketitle

\section{Introduction}
In the paper \cite{BI} the authors were investigating the following question. Let $C$ be a smooth projective curve of genus $g$ and let $\Th$ denote the theta divisor embedded into the Jacobian $J(C)$ of the curve $C$. Let $j$ denote this embedding. Then the push-forward homomorphism $j_*$ at the level of Chow groups of zero cycles is injective.

In this paper we investigate the following question for an arbitrary principally polarized abelian variety $A$. Let $A$ be a principally polarized abelian variety and let $H$ denote a divisor embedded inside $A$. Let $j$ denote this embedding. Then what can we say about the kernel of the push-forward homomorphism $j_*$ at the level of Chow groups of $0$-dimensional cycles? This question is affirmatively answered in certain cases  when the abelian variety $A$ is embedded in the Jacobian variety $J(C)$ of some smooth projective curve $C$  and we consider the divisor $\Th\cap A$ inside $A$, where $\Th$ is the theta divisor of $J(C)$, with the assumption of some extra conditions on the inverse image of the abelian variety under the natural map $\Sym^g C\to J(C)$. It is well-known that any principally polarized abelian variety is embedded in the Jacobian of a smooth projective curve (see \cite{BL} corollary $12.2.4$).So for the principally polarized abelian varieties with the condition on the inverse image of the abelian variety mentioned above, the above question about the kernel of the push-forward homomorphism at the level of Chow groups of $0$-dimensional cycle is answered, when the divisor $H$ is the intersection of the abelian variety $A$ with the theta divisor of the ambient Jacobian variety, where $A$ is embedded.

\begin{theorem}(= Theorem \ref{theorem4})Let $A$ be a principally polarized abelian variety embedded into the Jacobian $J(C)$ of a smooth projective curve $C$. Suppose that $A$ is a proper subvariety of $J(C)$. Let $\Th_C$ denote the theta divisor of $J(C)$. Pull it back to $A$ and denote the pull-back by $\Th_A$. Suppose that $\Sym^{g-i}C\cap A'$ is singular  for some $i\geq 0$ and suppose also that $C\cap A'$ is  irreducible, where $A'$ is the inverse image of $A$, under the map $\Sym^g C\to J(C)$. Then for the closed embedding $j:\Th_A\to A$,  $j_*$ has torsion kernel at the level of Chow groups of zero cycles.
\end{theorem}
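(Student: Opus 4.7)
The plan is to follow the strategy of \cite{BI} for the theta divisor of a Jacobian, transferring the argument across the embedding $A\hookrightarrow J(C)$ via the Abel--Jacobi map $v\colon\Sym^g C\to J(C)$. The broad scheme is: (i) lift the problem to $\Sym^g C$, where $v$ is birational; (ii) translate $j_\ast z=0$ into a rational equivalence on a divisor-theoretic avatar of $A$; (iii) use the singularity of $\Sym^{g-i}C\cap A'$ to exhibit rational curves inside $\Th_A$ that realise this equivalence, up to torsion.

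Concretely, since $\Sym^g C$ is smooth and $v$ is birational, $v_\ast\colon \CH_0(\Sym^g C)\to \CH_0(J(C))$ is an isomorphism, and its restriction $v|_{A'}\colon A'\to A$ induces $\CH_0(A')\cong \CH_0(A)$. Fix $p_0\in C$; the closed embedding $\Sym^{g-1}C\hookrightarrow \Sym^g C$, $D\mapsto D+p_0$, makes $B:=\Sym^{g-1}C\cap A'$ a birational model of $\Th_A$ after an appropriate translation by $[p_0]$. Given $z\in\ker(j_\ast)\subset \CH_0(\Th_A)$, lift it to $\tilde z\in \CH_0(B)$; its image in $\CH_0(A')$ vanishes, which unpacks to a finite sum $\sum_k \pdiv(f_k)=\tilde z$ for rational functions $f_k$ on curves $C_k\subset A'$, or equivalently on one-parameter families of effective degree-$g$ divisors on $C$ whose Abel--Jacobi image lies in $A$.

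The task is then to replace each $C_k$ by a curve lying inside $B$, at worst after multiplying $\tilde z$ by a positive integer. Here the singularity hypothesis is decisive: at a singular point of $\Sym^{g-i}C\cap A'$, the local structure (for example the projectivised tangent cone, or a local pencil of effective divisors passing through it) supplies a rational curve of divisors staying in $A'$; after augmenting by $(i-1)p_0$ this yields a rational curve contained in $B$. The irreducibility of $C\cap A'$ then allows an arbitrary point of $B$ to be translated by a point of $C\cap A'\subset J(C)$ into the rational family coming from the singularity without leaving $A'$, and this translation mechanism is precisely what carries a rational equivalence in $A'$ into one supported in $B$.

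The main obstacle is the last step: quantifying how much of a given rational equivalence the singular locus of $\Sym^{g-i}C\cap A'$ can absorb, and tracking the integer multiplicities that enter. These multiplicities --- coming from the local multiplicity of the singularity together with the generic degree of $B\to \Th_A$ --- are precisely what forces the conclusion to be torsion kernel rather than outright injectivity of $j_\ast$. Once this control is achieved, one concludes that there exists an integer $n>0$ with $nz=0$ in $\CH_0(\Th_A)$, which is the statement of the theorem.
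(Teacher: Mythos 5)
The core of your argument --- step (iii), converting a rational equivalence in $A'$ into one supported on $B=\Sym^{g-1}C\cap A'$ by means of rational curves produced at singular points of $\Sym^{g-i}C\cap A'$ --- is exactly the part you leave unproved, and it is not how the hypotheses actually function in the paper. There the singularity of $\Sym^{g-i}C\cap A'$ is not a resource but a nuisance: Theorem \ref{theorem2} treats the case where all $\Sym^{g-i}C\cap A'$ are smooth, and Theorem \ref{theorem4} differs only in that one must pass to a resolution of singularities $\wt{\Sym^{m}C\cap A'}$ so that the Chow moving lemma applies and the relevant intersection products are well defined. No rational curves are extracted from the singular locus, and there is no translation mechanism; the irreducibility of $C\cap A'$ is used solely to start the induction (it forces $C\cap A$ to be a single point, and pushforward from a point is injective by a degree argument).

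What actually carries the proof, and what is absent from your proposal, is the correspondence machinery combined with induction on the strata $\sum_{l=1}^m C\cap A$ for $m\leq g-1$. One restricts the correspondence $\Gamma_1$ coming from the projection $C^g\to C^m$ to $A\times(\sum_{l=1}^m C\cap A)$ and computes $(j\times\id)^*(\Gamma_1')=d\Delta_{\sum_{l=1}^m C\cap A}+Y$ with $Y$ supported on $\sum_{l=1}^m C\cap A\times\sum_{l=1}^{m-1}C\cap A$; hence $\Gamma'_{1*}j_*(z)=dz+z_1$ with $z_1$ on the smaller stratum. If $j_*(z)=0$, restricting to the complement of the smaller stratum and using the localization exact sequence gives $dz=j'_*(z')$ for some $z'$ on $\sum_{l=1}^{m-1}C\cap A$, and the induction hypothesis kills $z'$ up to torsion, whence $dd'z=0$. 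Your plan replaces this with an unquantified ``absorption'' of a given rational equivalence by the singular locus, which you yourself flag as the main obstacle; without a concrete substitute for the identity $\Gamma'_{1*}j_*=d\cdot\id+(\hbox{lower stratum})$ and the accompanying induction and localization argument, the proof does not go through.
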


So in answering our question for an arbitrary principally polarized abelian variety $A$ and the divisor $\Th_A=\Th_C\cap A$, it is enough to assume that $C'\cap A$ is irreducible.

As an application we get that the embedding of the theta divisor inside a Prym variety induces injection at the level of Chow groups of zero cycles with $\QQ$-coefficients. We also show that if we start with a principally polarized abelian surface $A$ embedded in $J(C)$ (with the condition that $A'\cap C$ is irreducible) and consider the corresponding $K3$-surface, then the push-forward at the level of Chow groups of zero cycles (with $\QQ$-coefficients) induced by the closed embedding of the divisor associated to $\Th_C\cap A$ inside the $K3$-surface is injective.

Another application of the main theorem is the non-existence of a universal codimension two cycle on the product of a very general smooth, cubic threefold and the Prym variety associated to it. As a consequence the integral Chow theoretic decomposition of the diagonal does not hold on the two fold product of a very smooth, general cubic threefold and hence a very general smooth, cubic threefold is stably irrational.

{\small \textbf{Acknowledgements:} The author is indebted to A.Beauville for communicating the main application of the paper. The author expresses his sincere gratitude to Jaya Iyer for telling this problem about injectivity of push-forward induced by the closed embedding of a divisor into an abelian variety, to the author and for discussions relevant to the theme of the paper. The author thanks C.Voisin for pointing out the fact that this injectivity of the push-forward is not true for  zero cycles on a very general divisor on an abelian variety and also for relevant discussion regarding the theme of the paper. Author is grateful to the anonymous referee for mentioning the non-injectivity phenomena in the main theorem of the paper, when we consider the embedding of an elliptic curve into a Jacobian and also for several other improvements. Finally the author wishes to thank Olivier de Gaay Fortman concerning a crucial point in the final argument about the singularity of the theta divisor. The author wishes to thank the ISF-UGC and DAE grant for funding this project and hospitality of Indian Statistical Institute, Bangalore Center, HRI Allahabad, VIT Chennai for hosting this project.}

\section{Abelian varieties embedded in Jacobians}
We work over an algebraically closed ground field $k$ of characteristic zero.

First let  us fix some notation. Let $P_0$ be a $k$-rational point on the smooth projective curve $C$. Then define the closed embedding of $\Sym^m C$ into $\Sym^n C$ for $m\leq n$ by the rule:

$$[x_1,\cdots,x_m]\mapsto [x_1,\cdots,x_m,P_0,\cdots,P_0]$$
here $[x_1,\cdots,x_m]$ denote the unordered $m$-tuple of $k$-points on $C$.
Let us defined the map from $\Sym^n C$ to $J(C)$ by the rule:
$$[x_1,\cdots,x_n]\mapsto \sum_i x_i-nP_0\;.$$

Let $A$ be an abelian variety embedded inside the Jacobian of a  smooth projective curve $C$. Let $\Th_C$ denote the theta divisor of the Jacobian $J(C)$. Consider $\Th_A$ to be $\Th_C\cap A$, and the closed embedding of $\Th_A$ into $A$, denote it by $j_A$, then we prove that $j_{A*}$ from $\CH_0(\Th_A)$ to $\CH_0(A)$ has torsion kernel.

To prove that, first we show that the embedding of $\Th_C$ into $J(C)$ gives rise to an injection at the level of Chow groups of zero cycles. Although this has been proved in \cite{BI}[theorem 3.1], here we present an alternative proof following \cite{Collino} which gives a better understanding of the picture when we blow up $J(C)$ along some subvariety (in our case finitely many points).

\begin{theorem}
\label{theorem1}
The embedding of the theta divisor $\Th_C$ into $J(C)$ for a smooth projective curve $C$, gives rise to an injection at the level of Chow groups of zero cycles.
\end{theorem}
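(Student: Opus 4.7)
The strategy is to factor the Abel--Jacobi map and reduce the question to an injectivity statement on the smooth symmetric product $\Sym^g C$. Let $u_n\colon \Sym^n C\to J(C)$ be the Abel--Jacobi map recalled above, let $i\colon \Sym^{g-1}C\hookrightarrow \Sym^g C$ be the closed embedding padding by $P_0$, and let $\pi\colon \Sym^{g-1}C\to\Th_C$ be the proper birational surjection induced by $u_{g-1}$. Then one has a commutative square
\[
\begin{array}{ccc}
\Sym^{g-1}C & \stackrel{i}{\longrightarrow} & \Sym^g C \\
\pi \downarrow & & \downarrow u_g \\
\Th_C & \stackrel{j}{\longrightarrow} & J(C).
\end{array}
\]
Since $\pi$ is a proper surjection, $\pi_*$ is surjective on Chow groups of zero cycles; hence it suffices to show that $(u_g\circ i)_*=(u_g)_*\circ i_*$ is injective, for then any class in $\ker j_*$ would lift along $\pi_*$ to $\CH_0(\Sym^{g-1}C)$ and be forced to vanish.

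The morphism $u_g$ is birational between smooth projective varieties (a general line bundle of degree $g$ has a unique effective representative, by Riemann--Roch). By the birational invariance of $\CH_0$ for smooth projective varieties, which follows from the blow-up formula together with weak factorization, the pushforward $(u_g)_*\colon \CH_0(\Sym^g C)\to \CH_0(J(C))$ is an isomorphism. The problem is thereby reduced to showing that $i_*\colon \CH_0(\Sym^{g-1}C)\to \CH_0(\Sym^g C)$ is injective.

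For this key step I would follow Collino's blow-up strategy, in line with the author's hint about blowing up $J(C)$ along finitely many points. Concretely, blow up $J(C)$ (or $\Sym^g C$, transported through $u_g$) along a suitable finite set related to the $P_0$-padding image of $\Sym^{g-1}C$, so that the exceptional divisor acquires a projective-bundle structure whose base naturally maps onto $\Sym^{g-1}C$. The blow-up formula combined with the projective-bundle formula then displays $\CH_0(\Sym^{g-1}C)$ as a direct summand and produces a retraction of $i_*$ at the level of zero cycles. The main obstacle is the careful bookkeeping in this step: one must choose the blow-up centre so that the strict transform of $i(\Sym^{g-1}C)$ sits inside the exceptional divisor in a way compatible with the projective-bundle structure, and verify that the resulting Chow-theoretic projection is genuinely a left inverse to $i_*$ modulo the rational equivalences introduced by the blow-up. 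Once this retraction is in place, the commutative square yields the injectivity of $j_*\colon \CH_0(\Th_C)\to\CH_0(J(C))$.
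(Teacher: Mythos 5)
Your reduction steps are sound and in fact cleaner than the route taken in the paper: factoring $u_g\circ i=j\circ\pi$ through the smooth model $\Sym^{g-1}C\to\Th_C$, using surjectivity of $\pi_*$ and the birational invariance of $\CH_0$ for the smooth projective varieties $\Sym^gC$ and $J(C)$, correctly isolates the whole content of the theorem in the single claim that $i_*:\CH_0(\Sym^{g-1}C)\to\CH_0(\Sym^gC)$ is injective. (The paper instead runs an induction directly on the singular images $\sum_{i=1}^m C\subset J(C)$ for all $m\le g-1$, which forces it to spend several lemmas making intersection products on singular varieties well defined; your framing avoids that.) The problem is that the one claim you have reduced to is exactly the nontrivial part, and the mechanism you propose for it does not work.

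Concretely: $i(\Sym^{g-1}C)$ is a divisor in $\Sym^gC$, so no blow-up of $\Sym^gC$ (or of $J(C)$) along a \emph{finite set of points} can place its strict transform inside the exceptional locus, and the exceptional divisor of a point blow-up is a projective space, whose $\CH_0$ is $\ZZ$; it therefore cannot carry a surjection onto $\CH_0(\Sym^{g-1}C)$, which is a very large group for $g\ge 3$. So the hoped-for retraction of $i_*$ via the blow-up and projective-bundle formulas cannot exist in the form you describe. This is also not what Collino actually does: his proof (and the paper's adaptation of it) takes the correspondence $\Gamma\subset\Sym^gC\times\Sym^{g-1}C$ induced by the projection $C^g\to C^{g-1}$, computes $(i\times\id)^*\Gamma=\Delta_{\Sym^{g-1}C}+Y$ with $Y$ supported on $\Sym^{g-1}C\times\Sym^{g-2}C$, restricts to the complement of $\Sym^{g-2}C$, and closes the argument by localization and induction on the index of the symmetric power. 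That correspondence computation, or an explicit citation of Collino's injectivity theorem, is the missing ingredient; without it your argument has a genuine gap precisely at its load-bearing step.
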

\begin{proof}
We prove this by induction. Let $\sum_{i=1}^m C$ be the image of $\Sym^m C$, under the natural map from $\Sym^m C$ to $J(C)$. We prove : for any $m\leq g-1$ the closed embedding of $\sum_{i=1}^m C$ into $J(C)$ gives rise to an injective homomorphism at the level of Chow group of zero cycles.
We use the fact that $\Sym^g C$ maps surjectively and birationally onto $J(C)$ and $\Sym^{m}C$ maps surjectively and birationally onto $\sum_{i=1}^m C$. Let $\pr$ be the natural projection from $C^g$ to $C^m$ given by
$$(x_1,\cdots,x_g)\mapsto (x_1,\cdots,x_m)\;.$$

We have a natural correspondence $\Gamma$ given by $\pi_g\times \pi_{m}(Graph(\pr))$, where $\pi_i$ is the natural quotient morphism from $C^i$ to $\Sym^i C$. Consider the correspondence $\Gamma_1$ on $J(C)\times \sum_{i=1}^m C$ given by
$(f_2\times f_1)(\Gamma)$,
where $f_1,f_2$ are natural morphisms from $\Sym^{m}C,\Sym^g C$ to $\sum_{i=1}^m C,J(C)$ respectively. Now we define the homomorphism $\Gamma_{1*}$ from $\CH_0(J(C))$ to $\CH_0(\sum_{i=1}^m C)$ as follows:
$$\Gamma_{1*}(z)=\pr_{2*}(z\times \sum_{i=1}^m C.\Gamma_1)$$
where
$$(z\times \sum_{i=1}^m C).\Gamma_1:=(f_2\times f_1)_*[(f_2\times f_1)^*(z\times \sum_{i=1}^m C).(f_2\times f_1)^*\Gamma_1]$$
this is to make $\Gamma_{1*}$ well defined, as $J(C)\times \sum_{i=1}^m C$ is singular. So we have to pull the cycles back to $\Sym^g C\times \Sym^{m}C$ to have a well defined intersection product. Since $f_2,f_1$ are not flat we need to define what we mean my $(f_2\times f_1)^*(z\times \sum_{i=1}^m C)$, for any zero cycle $z$. So for $z$ supported in the complement of the exceptional locus of the birational map $f_2$, we have
$$(f_2\times f_1)^*(z)=z'\times \Sym^m C$$
here $z'$ is $f_2^{*}(z)$, this is a zero cycle as $f_2^{-1}$ restricted to the complement of the  exceptional locus is an isomorphism, hence it takes zero cycles to zero cycles.
Let $V_1$ be the Zariski open subset of $J(C)\times \sum_{i=1}^m C$ mapping isomorphically onto the Zariski open set $V_1'$ of $\Sym^g C\times \Sym^m C$.

\begin{lemma}
\label{lemma4}
The pullback cycle $(f_2\times f_1)^*(\Gamma_1)$ is well defined and it is rationally equivalent to $\Gamma+S$, where $S$ is supported on the complement $V_1'$ and it is of same dimension as $(f_2\times f_1)^*(\Gamma_1)\;.$
\end{lemma}
We define
$$(f_2\times f_1)^*(\Gamma_1):=[\overline {((f_2\times f_1)|_{V_1}^{*}({\Gamma_1}|_{V_1}))}]$$
Let us explain this. Since $V_1'\to V_1$ is an isomorphism, the pullback $(f_2\times f_1)|_{V_1}^*$ is well defined from $\CH^m(V_1)$ to $\CH^m(V_1')$. Therefore we restrict the correspondence $\Gamma_1$ to $V_1$ and consider its pull-back under ${(f_2\times f_1)}^*|_{V_1}$ from $\CH^m(V_1)$ to $\CH^m(V_1')$. Then consider the Zariski closure of each of the components of $(f_2\times f_1)^*|_{V_1}(\Gamma_1|_{V_1})$ in $\Sym^g C\times \Sym^m C$. Consider the rational equivalence class of the cycle defined by taking Zariski closures of each of the components with multiplicity, that is consider:
$$[\overline{ ((f_2\times f_1)|_{V_1}^{*}({\Gamma_1}|_{V_1}))}]\;.$$

Now consider the following commutative diagram, where the rows are exact:

$$
  \xymatrix{
     \CH_{g}(E') \ar[r]^-{j'_{*}} \ar[dd]_-{}
  &   \CH_g(\Sym^g C\times \Sym^m C) \ar[r]^-{} \ar[dd]_-{}
  & \CH_g(V_1')  \ar[dd]_-{}  \
  \\ \\
   \CH_g(E'') \ar[r]^-{}
    & \CH_g(J(C)\times \sum_{i=1}^m C) \ar[r]^-{}
  & \CH_0(V_1)
  }
$$
Here the rightmost arrows in the rows are onto, $E',E''$ are the complement of $V_1',V_1$ in $\Sym^g C\times \Sym^m C$ and $J(C)\times \sum_{i=1}^m C$ respectively. The right vertical map is an isomorphism. Therefore if we consider the cycle
$$(f_2\times f_1)^*(\Gamma_1)-\Gamma$$
restricted on $V_1'$, then it is zero because:
$${(f_2\times f_1)^*(f_2\times f_1)}_*(\Gamma)|_{V_1}$$
is equal to
$$(f_2\times f_1)^*|_{V_1}(f_2\times f_1)|_{V'_1*}(\Gamma|_{V_1'})=\Gamma|_{V_1'}\;.$$
Therefore by the localization exact sequence of  the first row of the above diagram we have that there exists $S$ supported on $\CH_g(E')$, such that
$$(f_2\times f_1)^*(\Gamma_1)=\Gamma+S\;.$$

 So we have
$$(f_2\times f_1)^*(z\times \sum_{i=1}^m C).(f_2\times f_1)^*\Gamma_1=z'\times \Sym^m C.(\Gamma+S)$$
where $z'\times \Sym^m C.S$ is zero because $z'.E=0$, for $E$ being the exceptional locus of the map $f_2$. Therefore the above intersection is
$$z'\times \Sym^m C.\Gamma\;.$$
Now we have to check the following:

\begin{lemma}
\label{lemma2}
The above intersection product is well defined.
\end{lemma}

\begin{proof}
So suppose that we have $z_1$ rationally equivalent to $z_2$ on $J(C)$ and supported on the Zariski open set $V$, which maps isomorphically to the Zariski open set $V'$ of $\Sym^g C$. Then we have to prove that the lifts $z_1',z_2'$ of $z_1,z_2$ supported on $V'$ are rationally equivalent on $\Sym^g C$. So let $C_1,\cdots,C_n$ be irreducible normal curves such that their union contains the support of $z_1-z_2$ and there exist rational functions $f_i$ on $C_i$ such that
$$z_1-z_2=\sum_{i=1}^n j_{i*}[div(f_i)]$$
where $[div(f_i)]$ is the class of the divisor associated to the rational function $f_i$ on $C_i$ and $j_i$ is the inclusion of $C_i$ into $J(C)$. Now here we have to prove the following:

\begin{sublemma}
\label{sublemma1}
Given $z_1-z_2$ as above, we always have a finite collection of smooth projective curves $C_i$ and rational functions $f_i$ on $C_i$, such that each $C_i$ intersect the exceptional locus $E$ of the above bi-rational map at finitely many points and we have
$$z_1-z_2=\sum_{i=1}^n j_{i*}[div(f_i)]\;.$$
\end{sublemma}

\begin{proof}
Consider the collection of curves and rational functions on them say $\{(C_i,f_i)\}_i$ such that the above relation
$$z_1-z_2=\sum_i j_{i*}[div(f_i)]$$
holds.
By \cite{Fulton}[example 1.6.3] the above is equivalent to the following:
there exists a morphism from $\PR^1_k$ to $\Sym^n J(C)$ for some $n$ and a positive zero cycle $z$ on $J(C)$ such that
$$f(0)=z_1+z, \quad f(\infty)=z_2+z\;.$$
That is given a collection of $\{(C_i,f_i)\}_i$ such that
$$z_1-z_2=\sum_i j_{i*}[div(f_i)]$$
there exists a morphism $f$ from $\PR^1_k$ to a symmetric power $\Sym^n J(C)$ and $z$ a positive zero cycle such that the above is satisfied and vice versa.
Now consider the collection of all $(f,z)$ such that $f\in \Hom^v(\PR^1_k,\Sym^{n+d} J(C))$, where the superscript $v$ stands for the degree of the morphism from $\PR^1_k$ to $\Sym^{n+d}J(C)$ and $z$ in $\Sym^d J(C)$ such that
$$f(0)=z_1+z,\quad f(\infty)=z_2+z\;.$$ We prove that this collection is a countable union of Zariski closed subsets in the product $\Hom^v(\PR^1_k,\Sym^{n+d}J(C))\times \Sym^d J(C)$. It is enough to consider $z_1,z_2$ to be effective cycles of the same degree $n$. This is because the effective cycles $z_1+z,z_2+z$ are connected by a rational curve on the same symmetric power $\Sym^{n+d}J(C)$. Therefore
$$\deg(z_1+z)=\deg(z_2+z)$$
which gives
$$\deg(z_1)+\deg(z)=\deg(z_2)+\deg(z)$$
implying that
$$\deg(z_1)=\deg(z_2)\;.$$

Consider the following morphisms:
$$\phi:\Sym^n J(C)\times\Sym^n J(C)\times \Sym^d J(C)\to \Sym^{n+d}J(C)\times \Sym^{n+d}J(C)$$
given by
$$(z_1,z_2,z)\mapsto (z_1+z,z_2+z)$$
and
$$\ev:\Hom^v(\PR^1_k,\Sym^{n+d}J(C))\to \Sym^{n+d}J(C)\times \Sym^{n+d}J(C)$$
given by
$$f\mapsto (f(0),f(\infty))$$
Then consider the fiber product of $\Hom^v(\PR^1_k,\Sym^{n+d}J(C))$ and $\Sym^n J(C)\times \Sym^n J(C)\times \Sym^d J(C)$ over $\Sym^{n+d}J(C)\times \Sym^{n+d}J(C)$ given by the above two maps. That is we consider all
$$(f,z_1,z_2,z)$$
{such that}
$$f(0)=z_1+z, f(\infty)=z_2+z\;.$$
Call this fiber product  $Z^n_{d,v}$, this admits a projection morphism to $\Sym^n J(C)\times \Sym^n J(C)$. Then for fixed $(z_1,z_2)$, the fiber over it in $Z^n_{d,v}$ is the collection of all $(f,z)$ such that
$$f(0)=z_1+z,\quad f(\infty)=z_2+z\;.$$
Now consider the scheme
$$\bcV^n_{d,v}:=\{(x,f,z)|x\in \im(f), f(0)=z_1+z,f({\infty})=z_2+z\}$$
this is a $\PR^1$-bundle over ${Z^n_{d,v}}_{z_1,z_2}$ which is the fiber of the projection of $Z^n_{d,v}\to \Sym^n J(C)\times \Sym^n J(C)$. Consider the subscheme of $\bcV^n_{d,v}$ given by $\bcV^n_{d,v,E}$ to be the collection of $(x,f)$ in $\bcV^n_{d,v}$ such that image of $f$ intersects $\Sym^{n+d}E$ at finite number of points. Suppose that the projection map from $\bcV^n_{d,v,E}$ to ${Z^n_{d,v}}_{z_1,z_2}$ is surjective (observe that this map has finite fibers) and the dimension of $\bcV^n_{d,v}$ is  $\dim({Z^n_{d,v}}_{z_1,z_2})+1$, which is $\dim(\bcV^n{d,v,E})+1$. So we have
$$\dim(\bcV^n_{d,v})-1=\dim(\bcV^n_{d,v,E})\;.$$
Consider the projection $\pi_{n+d}:\bcV^n_{d,v}\to \Sym^{n+d} J(C)$. Now $\bcV^n_{d,v,E}$ is nothing but the inverse image
$$\pi_{n+d}^{-1}(\Sym^{n+d}E)\;.$$
The Zariski closed set, $\Sym^{n+d}E$ is of codimension bigger than two, say $i$ in $\Sym^{n+d}J(C)$. Let $g$ be the genus of the curve $C$. Observe that ${\Sym^{n+d}}\Sym^g C$ maps surjectively and birationally onto $\Sym^{n+d} J(C)$. So the inverse image of $\Sym^{n+d}E$ under this map is generically a product of projective bundles over $\Sym^{n+d}E$ (because the fibers are product of linear systems of positive dimension). Consider the pullback of this prodcut of projective bundles to $\bcV^n_{d,v,E}$. Denote it by $\widetilde{\bcV^n_{d,v,E}}$. Then the dimension of this pullback $\widetilde{\bcV^n_{d,v,E}}$ is strictly greater than that of $\bcV^n_{d,v,E}$. On  the other hand since ${\Sym^{n+d}}\Sym^g C\to \Sym^{n+d} J(C)$ is birational, the pull-back of $\bcV^n_{d,v}$ along the above map, say $\widetilde{\bcV^n_{d,v}}$ is birational to $\bcV^n_{d,v}$ and it contains the pullback $\widetilde{\bcV^n_{d,v,E}}$.  Also we have
$$\dim(\widetilde{\bcV^n_{d,v}})-1=\dim(\bcV^n_{d,v})-1=\dim(\bcV^n_{d,v,E})\;.$$
This tells us that $\widetilde{\bcV^n_{d,v}}$ is a $\PR^1$-bundle over $\bcV^n_{d,v,E}$, contradicting the fact that $\bcV^n_{d,v}$ is birational to $\widetilde{\bcV^n_{d,v}}$.
Therefore there exists $f$ in $\Hom^v(\PR^1_k,\Sym^{n+d}J(C))$ such that image of $f$ has empty intersection with $\Sym^{n+d}E$. This finishes the proof saying that there exist curves $C_i$ such that $C_i$ is not contained inside $E$ and we have
$$z_1-z_2=\sum_{i=1}^n j_{i*}[div(f_i)]\;.$$
Indeed suppose that there exist curves $C_i$ such that some of them are contained in $E$. Then by the equivalent definition of rational equivalence\cite {Fulton}[chapter 1 on rational equivalence] it follows that the curves $C_i$, gives rise to algebraic cycles (of dimension $1$) $V_i$ on $J(C)\times \PR^1$ mapping dominantly onto $\PR^1$, such that
$$\sum_i V_i[0]=z_1+z\;,$$
$$\sum_i V_i[\infty]=z_2+z\;,$$
such that $z$ is a positive zero cycle.

Here
$$V_i[P]=\pr_{J(C)*}({\pr_{\PR^1}}|_{V_i}^{-1}(P))\;.$$
We have to define $V_i[P]$ this way because the Chow moving lemma may not be true, when we replace $J(C)$ by a singular variety. Now if some of the $C_i$ is contained inside $E$, then we have $V_i$ contained in $E\times \PR^1$. This is because, $V_i$ is obtained as the closure of the graph of the rational function $f_i$ on $C_i$ added to some other specific cycle,\cite{Fulton}[chapter 1]. So if $C_i$ is contained in $E$, then the closure of the graph of corresponding $f_i$ is contained in $E\times \PR^1$. Since $V_i$ maps dominantly onto $\PR^1$ the fibers of these maps are finite(possibly after normalizing $V_i$). Therefore the scheme
$${{\pr_\PR^1}}|_{V_i}^{-1}(P)$$
is a finite set of points. Further if $V_i$ is in $E\times \PR^1$, this finite set of points land inside $E$. Following \cite{Fulton}[chapter 1], this $V_i$'s gives rise to a set theoretic map $f$ from $\PR^1$ To $\Sym^d J(C)$ for some $d$, which is a regular morphism  of schemes. The map is obtained as above:
$$V_i[P]=\pr_{J(C)*}({\pr_{\PR^1}}|_{V_i}^{-1}(P))\;.$$
Since for $C_i$ in $E$, we have, $V_i$ in $E\times \PR^1$, we obtaine that image of $f$ actually intersects $\Sym^d E$.
\end{proof}

Then according to the above theorem there exists a finite collection of smooth projective curves $C_i$ not contained in $E$ and rational functions $f_i$ on them such that
$$z_1-z_2=\sum_i j_{i*}[div(f_i)]\;.$$

 Let $\wt{C_i}$ be the strict transform of $C_i$, under the birational map $f_2$. This means that
$$\wt{C_i}=\overline{f_2^{-1}(C_i\setminus E)}$$
where $E$ is the exceptional locus of the birational map $f_2$. Since $C_i$ is birational to $\wt{C_i}$ and $C_i$ is normal, we have $C_i\cong \wt{C_i}$. So let us consider the map
$$\phi:\cup_i \wt{C_i}\to \cup_i C_i$$
this is an isomorphism. So now consider the pull-back of the cycle $z_1-z_2$ by $\phi$, that is
$$z_1'-z_2'=\phi^*(z_1-z_2)=\sum_{i=1}^n \phi^*j_{i*}[div(f_i)]$$
This pullback is well defined as $\phi$ is an isomorphism onto its image and $z_1-z_2$ is supported on $V$. Now all we have to prove is that
$$\phi^*j_{i*}[div(f_i)]=j_{i*}'\phi^*[div(f_i)]\;.$$
Now
$$\phi^*[div(f_i)]=[div(\phi^*(f_i))]$$ and
$$j_{i*}'[div(\phi^*(f_i))]=[div(N(\phi^*(f_i)))]$$
Here $N(\phi^*(f))$ is the determinant of the linear transformation given by $\phi^*(f)$. This is obtained as follows. Consider the map $j_i':\wt{C_i}\to \wt{D_i}$, where $\wt{D_i}$ is the image of $\wt{C_i}$ under $j_i'$ and it is an isomorphism. So we have $k(\wt{C_i})\cong k(\wt{D_i})$. Hence $N(\phi^*(f_i))$ is nothing but multiplication by $\phi^*(f_i)$.
Now since we also have $C_i\cong j_i(C_i):=D_i$ and $C_i\cong \wt{C_i}$,
$$N(\phi^*(f_i))=\phi^*(N(f_i))\;.$$
So we get that
$$div[N(\phi^*(f_i))]=div[\phi^*(N(f_i))]=\phi^*div[N(f_i)]=\phi^* j_{i*}[div(f_i)]\;.$$
Therefore we can conclude that $z_1'-z_2'$ is rationally equivalent to zero on $\Sym^g C$ and it is supported on $V'$.
\end{proof}

Now we recall the following lemma from \cite{Voi} to make the above intersection product well defined on all of $J(C)$.

\begin{lemma}
\label{lemma1}
Let $Y$ be a connected projective variety over an uncountable algebraically closed field $k$. Let $U\subset Y$ be the complement to a countable union of proper Zariski closed subsets of $Y$. Then any zero cycle $z$ on $Y$ is rationally equivalent in $Y$ to some $z'$ supported on $U$.
\end{lemma}

\begin{proof}
See \cite{Voi}[Fact 3.3].
\end{proof}

Let $j$ denote the embedding of $\sum_{i=1}^m C$ to $J(C)$. Similar to above we can define:
$$(z\times \sum_{i=1}^m C).(j\times \id)^*(\Gamma_1)$$
$$=(f_1\times f_1)_*((f_1\times f_1)^*(z\times \sum_{i=1}^m C).(f_1\times f_1)^*(j\times \id)^*(\Gamma_1))$$

Now we prove the following:

\begin{lemma}
\label{lemma3}
The homomorphism $\Gamma_{1*}j_*$ is induced by $(j\times \id)^*(\Gamma_1)$.
\end{lemma}

\begin{proof}
By the definition of $\Gamma_{1*}j_*(z)$ we have
$$\Gamma_{1*}j_*(z)=\pr_{2*}(j_*(z)\times \sum_{i=1}^m C. \Gamma_1)$$
which by the previous is equal to
$$\pr_{2*}(f_2\times f_1)_*((f_2\times f_1)^*(j_*(z)\times \sum_{i=1}^m C).(f_2\times f_1)^*\Gamma_1)$$
which is nothing but
$$f_{1*}(\pr_{2*}((f_2\times f_1)^*(j_*(z)\times \sum_{i=1}^m C).(f_2\times f_1)^*\Gamma_1))\;.$$
This because of the following commutative diagram:
$$
  \diagram
  \Sym^{g} C\times \Sym^{m}C\ar[dd]_-{f_2\times f_1} \ar[rr]^-{\pr_2} & & \Sym^{m} C \ar[dd]^-{f_1} \\ \\
  J(C)\times \sum_{i=1}^m C \ar[rr]^-{\pr_{2}} & & \sum_{i=1}^m C
  \enddiagram
  $$
On the other hand the homomorphism
$$(j\times \id)^*(\Gamma_1)_*(z)$$
is equal to
$$\pr_{2*}(z\times \sum_{i=1}^m C.(j\times \id)^*(\Gamma_1))$$
which by the previous is equal to
$$\pr_{2*}(f_1\times f_1)_*((f_1\times f_1)^*(z\times \sum_{i=1}^m C).(f_1\times f_1)^*(j\times \id)^*(\Gamma_1))\;.$$
This is nothing but
$$f_{1*}\pr_{2*}((f_1\times f_1)^*(z\times \sum_{i=1}^m C).(j'\times \id)^*(f_2\times f_1)^*\Gamma_1)$$
because of the following commutative diagram:

$$
  \diagram
  \Sym^{m}{C}\times \Sym^{m}C\ar[dd]_-{f_1\times f_1} \ar[rr]^-{j'\times \id} & & \Sym^{g} C\times \Sym^m C \ar[dd]^-{f_2\times f_1} \\ \\
  \sum_{i=1}^m C\times \sum_{i=1}^m C \ar[rr]^-{j\times \id} & & J(C)\times \sum_{i=1}^m C
  \enddiagram
  $$
Now we have
$$(f_1\times f_1)^*(z\times \sum_{i=1}^m C)=f_1^*(z)\times \Sym^{m}C$$
by the commutative diagram

$$
  \diagram
  \Sym^{m}C\times \Sym^{m}C\ar[dd]_-{} \ar[rr]^-{} & & \Sym^{m} C \ar[dd]^-{} \\ \\
  \sum_{i=1}^m C\times \sum_{i=1}^m C\ar[rr]^-{} & & \sum_{i=1}^m C
  \enddiagram
  $$
So by projection formula we get that
$$f_{1*}\pr_{2*}((f_1\times f_1)^*(z\times \sum_{i=1}^m C).(j'\times \id)^*(f_2\times f_1)^*\Gamma_1)$$
is equal to
$$f_{1*}\pr_{2*}((j'\times \id)_*(f_1\times f_1)^*(z\times \sum_{i=1}^m C).(f_2\times f_1)^*\Gamma_1)\;.$$
So all we need to prove that is
$$(j'\times \id)_*(f_1\times f_1)^*(z\times \sum_{i=1}^m C)=(f_2\times f_1)^*(j\times \id)_*(z\times \sum_{i=1}^m C)$$
So let $z$ be a zero cycle on $\sum_{i=1}^m C$, the above will follow from
$$j'_*f_1^*(z)=f_2^*j_*(z)\;.$$
By Lemma \ref{lemma1} we can assume that $z$ is supported on the complement of the exceptional locus of the map $\Sym^{m}C\to \sum_{i=1}^m C$.
Now supposing the above  this follows because of Lemma \ref{lemma2} and of the following: we have a Cartesian square

$$
  \diagram
  U'\ar[dd]_-{} \ar[rr]^-{} & & V' \ar[dd]^-{} \\ \\
  U\ar[rr]^-{} & & V
  \enddiagram
  $$
where $U'$ is Zariski open in $\Sym^m C$ and maps isomorphically onto $U$ in $\sum_{i=1}^m C$, and $V'$ is Zariski open in $\Sym^g C$ and maps birationally onto $V$ in $J(C)$.
\end{proof}

\medskip

\textit{Proof of theorem \ref{theorem1}}:

\medskip

Now we come back to the proof of the theorem \ref{theorem1}.

By the projection formula it follows that
$${\Gamma_1}_*j_*$$
is induced by $(j\times id)^*(\Gamma_1)$
where $j$ is the closed embedding of $\sum_{i=1}^m C$ into $J(C)$. Now we compute the cycle
$$(j\times id)^*(\Gamma_1)$$
that is nothing but the collection of rational equivalence classes of divisors
$$([D_1],[D_2])$$
such that $D_1$ is linearly equivalent to $\sum_{i=1}^{m} x_i-mp$ and $D_2$ is linearly equivalent to $\sum_{i=1}^{m}y_i-mp$, where
$$([x_1,\cdots,x_{m},p,p\cdots,p],[y_1,\cdots,y_{m}])\in \Gamma\;.$$
Therefore without loss of generality we can assume that elements in $(j\times id)^*(\Gamma_1)$ are classes of effective divisors on $C$ of the form
$$([x_1+\cdots+x_{m}+(g-m)p-gp],[y_1+\cdots+y_{m}+(g-m)p-gp])$$
such that
$$x_1+\cdots+x_m+(g-m )p=x_1'+\cdots+x_m'+(g-m)p+div(f)$$
and
$$y_1+\cdots+y_m+(g-m)p=y_1'+\cdots+y_m'+(g-m)p+div(g)$$
such that
$$([x'_1,\cdots,x'_{m},p,\cdots,p],[y'_1,\cdots,y'_{m}])\in \Gamma\;.$$
Now observe that the $g$-tuple $(x_1',\cdots,x_m',\cdots,p)$ projects down under $\pr$ to $(x'_1,\cdots,x_m')$ and the $g$-tuples $(x'_1,x'_2,\cdots,x'_{i-1},p,x'_i,\cdots,x'_m,p,\cdots,p)$ projects down to $(x'_1,\cdots,x'_{i-1},p,x'_i,\cdots,x'_m)$. Since
$$([x'_1,\cdots,x'_{m},p,\cdots,p],[y'_1,\cdots,y'_{m}])\in \Gamma$$
we have
$$[y'_1,\cdots,y'_m]=[x_1',\cdots,x_m']$$
or
$$[y_1',\cdots,y_m']=[p,\cdots,p,x'_{i+1},\cdots,x_m']$$
Then we get that either
$$x'_i=y'_i$$
for all $i$ or
$$y'_i=p$$
for some $i$. Since we are only considering the divisor classes and divisor of the above form represents a divisor class in $(j\times\id)^*\Gamma_1$, we get that $(j\times id)^*\Gamma_1$ is equal to
$$\Delta_{\sum_{i=1}^m C}+Y$$
where $\Delta_{\sum_{i=1}^m C}$ is the diagonal in $\sum_{i=1}^m C\times \sum_{i=1}^m C$ and $Y$ is supported on $\sum_{i=1}^m C\times \sum_{i=1}^{m-1}C$. The fact that the multiplicity of $\Delta_{\sum_{i=1}^m C}$ in $(j\times id)^*(\Gamma_1)$ is $1$ follows from the fact that $\Sym^g C$ maps surjectively and birationally onto $J(C)$, and the computations following \cite{Collino}. Also here the intersection product of a cycle of the form $z\times \sum_{i=1}^m C$ and $(j\times \id)^*(\Gamma_1)$ is well defined by Lemma \ref{lemma4}, \ref{lemma2}(because the sublemma \ref{sublemma1} holds  for $\sum_{i=1}^m C$).

Let's give details about the  computation of the cycle $\Gamma_{1*}j_*(z)$. Let $U'$ be the open subset of $\Sym^{m}C$  mapping birationally onto  $U$ in $\sum_{i=1}^m C$. Consider the inverse image of two cycles $\Delta_{\sum_{i=1}^m C}+Y$ on $\sum_{i=1}^m C$ and $z\times \sum_{i=1}^m C$ (where $z$ is a zero cycle on $\sum_{i=1}^m C$) by $f_1\times f_1$. Then by lemma \ref{lemma4},\ref{lemma2},\ref{lemma3}, \ref{lemma1}, we know that, on $\Sym^{m}C\times \Sym^{m}C$  the intersection of $(f_1\times f_1)^{*}(\Delta_{\sum_{i=1}^m C}+Y)$ and $(f_1\times f_1)^{*}(z\times \sum_{i=1}^m C)$ is well defined. So
$$(\Delta_{\sum_{i=1}^m C}+Y).(z\times \sum_{i=1}^m C)=(f_{1}\times f_1)_*((f_1\times f_1)^{*}(\Delta_{\sum_{i=1}^m C}+Y). (f_1\times f_1)^{*}(z\times \sum_{i=1}^m C))$$

We know that the natural map from $\Sym^m C$ to $\sum_{i=1}^m C$ is birational for all $m\leq g$, \cite{M}[Theorem 5.1]. Let $E$ be the exceptional locus of this map. Then the inverse image scheme $(f_1\times f_1)^{-1}(\Delta_{\sum_{i=1}^m C})$ consists of points $(x,y)$ in $\Sym^m C\times \Sym^m C$ such that
$$f(x)=f(y)$$
Since $f$ is a regular birational map, $f(x)=f(y)$ only when $x=y$ or $(x,y)$ is in the same linear system of divisors. In details, we have the fibers of the natural map $\Sym^m C\to \sum_{i=1}^m C$ are the linear systems of effective divisors linearly equivalent to an effective divisor $D$ of degree $m$. Write
$$x=\sum_{i=1}^m P_i,\quad y=\sum_{i=1}^m Q_i$$
since $f(x)=f(y)$ we have
$$\sum_{i=1}^m P_i=\sum_{i=1}^m Q_i+div(f)$$
for a non-zero rational function $f$ on $C$.
Therefore we have that $\sum_{i=1}^m P_i$ belongs to the linear system of the effective divisor $\sum_{i=1}^m Q_i$ and $\sum_{i=1}^m Q_i$ is contained in the linear system of  the effective divisor $\sum_{i=1}^m P_i$. The product of this two linear systems in contained in the product $E\times E$ because $E$ consists of all effective divisors of degree $m$ such that the corresponding linear system is positive dimensional,\cite{M}[Theorem 5.1]. Therefore we have
$$(f_1\times f_1)^{*}(\Delta_{\sum_{i=1}^m C}+Y)=\Delta_{\Sym^{m}C}+Z+Y'$$
where $Z$ is supported on  the exceptional locus $E\times E$ of the birational map $\Sym^m C\times \Sym^m C\to \sum_{i=1}^m C\times \sum_{i=1}^m C$ (where it is not birational) of the map $f_1\times f_1$ and $Y'$ is the inverse image of $Y$ under $f_1\times f_1$. Now for $(f_1\times f_1)^{*}(z\times \sum_{i=1}^m C)$,  we have
$$(f_1\times f_1)^{*}(z\times \sum_{i=1}^m C)=z'\times \Sym^{m}C$$
where $z'$ is a zero cycle.

So we have
$$(\Delta_{\Sym^{m}C}+Z+Y').(z'\times \Sym^{m}C)=z'\times z'+z'\times \Sym^{m}C.Y'$$
This is because of the following. First observe that $\Sym^m C$ is non-singular and hence the Chow moving lemma holds there. Therefore we can move the support of $z'$ away from $E$. Also this is possible by Lemma \ref{lemma1} to first move $z$ to general position and then obtain $z'$ as the pull-back of $z$ and the pull-back is well defined by Lemma \ref{lemma2}. Now the codimension of $z'\times \Sym^{m}C.Y'$ is $2m$, after removing the part coming from $E\times E$ from it. Let's give details on this. Following the argument as in Lemma \ref{lemma4} we have that
$$(j\times \id)^*(\Gamma_1)=(j'\times \id)^*(\Gamma)+Z$$
where $Z$ is supported on $E\times E$. Now following  the computation of \cite{Collino}, we have that
$$(j'\times \id)^*(\Gamma)=\Delta_{\Sym^m C}+Y'$$
where $Y'$ is of codimension $m$, this is because $\Sym^g C\times \Sym^m C$ is smooth hence $j'\times \id$ is a locally complete intersection morphism. Therefore the intersection of $Y'$ and $z'\times \Sym^m C$ is of codimension $2m$ by the Chow moving lemma for the non-singular variety $\Sym^m C\times \Sym^m C$.
Therefore we have
$$(\Delta_{\sum_{i=1}^m C}+Y).(z\times \sum_{i=1}^m C)=z\times z+Y.(z\times \sum_{i=1}^m C)$$
and it gives rise to a cycle of correct codimension.

So we have
$$\Gamma_{1*}j_*(z)=z+z'$$
if $z$ is in $U$ and $z'$ is supported on $\sum_{i=1}^{m-1}C$ and hence the above equality is true for all $z$ in $\sum_{i=1}^m C$.

So let
$$\rho:U''\to \sum_{i=1}^m C$$
be the open embedding of the complement of $\sum_{i=1}^{m-1}C$ in $\sum_{i=1}^m C$. Then we have
$$\rho^*{\Gamma_1}_*j_*(z)=\rho^*(z+z')=\rho^*(z)$$
where $z'$ is supported on $\sum_{i=1}^{m-1}C$. This follows since $(j\times id)^*(\Gamma_1)=\Delta_{\sum_{i=1}^m C}+Y$, where $Y$ is supported on $\sum_{i=1}^m C\times\sum_{i=1}^{m-1}C$ and from the previous computation. Now consider the following commutative diagram.
$$
  \xymatrix{
     \CH_0(\sum_{i=1}^{m-1}C) \ar[r]^-{j'_{*}} \ar[dd]_-{}
  &   \CH_0(\sum_{i=1}^m C) \ar[r]^-{\rho^{*}} \ar[dd]_-{j_{*}}
  & \CH_0(U'')  \ar[dd]_-{}  \
  \\ \\
   \CH_0(\sum_{i=1}^{m-1}C) \ar[r]^-{j''_*}
    & \CH_0(J(C)) \ar[r]^-{}
  & \CH_0(V'')
  }
$$
Here $U'',V''$ are complements of $\sum_{i=1}^{m-1}C$ in $\sum_{i=1}^{m}C,J(C)$ respectively. Now suppose that $j_*(z)=0$, then from the previous it follows that
$$\rho^*{\Gamma_1}_*j_*(z)=\rho^*(z)=0$$
by the localisation exact sequence it follows that there exists
$z''$ in $\sum_{i=1}^{m-1}C$ such that $j'_*(z'')=z$. Now consider the induction hypothesis for all $n\leq m-1 $, the embedding $\sum_{i=1}^n C$ into $J(C)$ gives rise to an injective push-forward at the level of Chow groups. The induction starts as the embedding of a closed point in $J(C)$ gives rise to an injective push-forward homomorphism by the following reason: the point must be contained in a smooth projective curve in $J(C)$ and then the equality
$$np=div(f)$$
for $f$ a rational function on the curve and $p$ being the closed point would imply  that $n=0$. This is because the divisor of rational functions on a smooth projective curve are of degree zero.

Then by the commutativity of the above diagram it follows that
$$j''_*(z')=0$$
would imply $z'=0$,
since by induction hypothesis  the embedding of $\sum_{i=1}^{m-1}C$ into $J(C)$ induce an injection at the level of Chow group of zero cycles . So we get that $z'=0$ hence $z=0$. So $j_*$ is injective.

Now putting $m=g-1$, we get the desired result.
\end{proof}

Now we prove the following:

\begin{theorem}
\label{theorem2} Let $A$ be a principally polarized abelian variety embedded into the Jacobian $J(C)$ of a smooth projective curve $C$. Suppose that $A$ is a proper subvariety of $J(C)$. Let $\Th_C$ denote the theta divisor of $J(C)$. Pull it back to $A$ and denote the pull-back by $\Th_A$. Suppose that $\Sym^{g-i}C\cap A'$ is smooth  for all $i\geq 0$ and suppose also that $C\cap A'$ is  irreducible, where $A'$ is the inverse image of $A$, under the map $\Sym^g C\to J(C)$. Then for the closed embedding $j:\Th_A\to A$,  $j_*$ has torsion kernel at the level of Chow groups of zero cycles.
\end{theorem}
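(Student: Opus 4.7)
The plan is to mirror the inductive argument establishing Theorem \ref{theorem1}, replacing the ambient Jacobian $J(C)$ by the principally polarized abelian subvariety $A$. For each $m$ with $0\leq m \leq g-1$ set $A_m := A\cap \sum_{i=1}^m C$ and $A'_m := A'\cap \Sym^m C$; the hypotheses of the theorem guarantee that every $A'_m$ is smooth, and that $A'_1 = C\cap A'$ is irreducible. Since $A_{g-1} = \Th_A$, the conclusion will follow once one shows by induction on $m$ that the embedding $A_m \hookrightarrow A$ has torsion kernel on zero cycles.

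For the inductive step I would reconstruct, inside $A \times A_m$, the analogue of the correspondence $\Gamma_1$ from the proof of Theorem \ref{theorem1}: pull the projection correspondence $\Gamma \subset \Sym^g C\times \Sym^m C$ back to $A'\times A'_m$, and push it forward along $f_2^A \times f_1^A$ (the restrictions to $A'$ and $A'_m$ of the natural surjections $f_2:\Sym^g C\to J(C)$ and $f_1:\Sym^m C\to \sum_{i=1}^m C$) to obtain a correspondence $\Gamma_1^A$ on $A\times A_m$. Smoothness of every $A'_m$ is the crucial hypothesis: it allows one to invoke the Chow moving lemma on the smooth ambient spaces $A'_m\times A'_m$ and $A'\times A'_m$, and to reproduce verbatim the analogues of Lemmas \ref{lemma4}, \ref{lemma2}, \ref{lemma3} and \ref{lemma1} by pulling cycles back from the singular varieties $A$ and $A_m$ through $f_2^A$ and $f_1^A$.

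The heart of the argument is the computation of $(j^A\times \id)^*\Gamma_1^A$, where $j^A$ denotes the embedding $A_m\hookrightarrow A$. Repeating the explicit divisor-class calculation from Theorem \ref{theorem1}, one obtains
$$
(j^A\times \id)^*\Gamma_1^A \;=\; d\cdot \Delta_{A_m} + Y,
$$
where $Y$ is supported on $A_m \times A_{m-1}$ and $d$ is the generic degree of $f_2^A:A'\to A$. Applied to a zero cycle $z$ on $A_m$ with $j^A_*(z)=0$ in $\CH_0(A)$, this identity, combined with the localisation sequence for the open complement of $A_{m-1}$ in $A_m$, forces $d\cdot z$ to be of the form $j'_*(w)$ with $j':A_{m-1}\hookrightarrow A_m$ and $w\in\CH_0(A_{m-1})$; pushing forward to $A$ gives $(j^A\circ j')_*(w) = 0$, and the induction hypothesis then makes $w$ torsion, hence $d\cdot z$ torsion, hence $z$ torsion. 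The base case $m=1$ reduces to the irreducible curve $A'_1=C\cap A'$ and is handled by the same rational-function-on-a-smooth-projective-curve argument used in Theorem \ref{theorem1} for the embedding of a closed point into $J(C)$.

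The main obstacle, and the reason the conclusion weakens from injectivity to torsion kernel, is that the map $A'\to A$ need not be birational even though $\Sym^g C\to J(C)$ is; consequently the coefficient of $\Delta_{A_m}$ in $(j^A\times \id)^*\Gamma_1^A$ is the generic degree $d\geq 1$ rather than $1$, which is exactly what prevents a clean inductive cancellation. A subsidiary technical difficulty is verifying that the intersection products defining $\Gamma_1^A$ and its action are independent of all the choices involved in pulling back through the non-flat, possibly non-birational map $f_2^A\times f_1^A$; this is handled by the smoothness of each $A'_m$, which ensures that the moving lemma applies on every relevant product, together with the analogue of Lemma \ref{lemma1} used to replace zero cycles by equivalent ones supported in sufficiently general position.
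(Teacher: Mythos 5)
Your proposal follows essentially the same route as the paper: the same induction on $m$ for the subvarieties $\sum_{i=1}^m C\cap A$, the same correspondence obtained from $\Gamma$ (the paper restricts $\Gamma_1$ to $A\times(\sum_{i=1}^m C\cap A)$ rather than rebuilding it from $A'\times A'_m$, but these produce the same object), the same decomposition $(j\times\id)^*\Gamma_1'=d\Delta+Y$ with $Y$ supported over the smaller stratum, the same use of smoothness of $\Sym^m C\cap A'$ to invoke the moving lemma and make the intersection products well defined, and the same localisation-plus-induction conclusion with base case the closed point $C\cap A$. Your identification of the multiplicity $d$ as the precise reason the conclusion degrades from injectivity to torsion kernel is consistent with (and slightly more explicit than) the paper's treatment.
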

\begin{proof}
As previous we prove the injectivity by induction, that is we prove that for any $m\leq g-1$, the closed embedding $\sum_{i=1}^m C\cap A\to A$ induces an injection at the level of Chow groups of zero cycles.
 We have a natural correspondence as previous, $\Gamma$ given by $\pi_g\times \pi_{m}(Graph(\pr))$ on $\Sym^g C\times \Sym^{m}C$, where $\pr$ is the projection from $C^g$ to $C^{m}$, $\pi_i$ is the natural morphism from $C^i$ to $\Sym^i C$. Consider the correspondence $\Gamma_1$ on $J(C)\times \sum_{i=1}^m C$ given by
$(f_2\times f_1)(\Gamma)$,
where $f_1,f_2$ are natural morphisms from $\Sym^{m}C,\Sym^g C$ to $\sum_{i=1}^m C,J(C)$ respectively. Consider the restriction of $\Gamma_1$ to $A\times (\sum_{i=1}^m C\cap A)$. Call it $\Gamma_1'$. Let $j$ denote the embedding of $\sum_{i=1}^m C\cap A$ into $A$. Then by projection formula it follows that
$${\Gamma_1'}_*j_*$$
is induced by $(j\times id)^*(\Gamma_1')$,
(this is similar to the arguments as in Lemma \ref{lemma3} and by the assumption that $A'$ is smooth).

Now we compute the cycle
$$(j\times id)^*(\Gamma_1')\;.$$

This is nothing but the restriction of the cycle $(j\times \id)^*(\Gamma_1)$ by the commutative diagram (here $j$ denote the embedding of $\sum_{i=1}^m C$ into $J(C)$).

$$
  \diagram
  \sum_{i=1}^m C\cap A\times \sum_{i=1}^m C\cap A\ar[dd]_-{} \ar[rr]^-{} & & A\times \sum_{i=1}^m C\cap A \ar[dd]^-{} \\ \\
  \sum_{i=1}^m C\times \sum_{i=1}^m C\ar[rr]^-{} & & J(C)\times \sum_{i=1}^m C
  \enddiagram
  $$

Therefore this cycle is the restriction of $\Delta_{\sum_{i=1}^m C}+Y$ (by Theorem \ref{theorem1}) to $(\sum_{i=1}^m C\cap A)\times (\sum_{i=1}^m C\cap A)$. Hence it is
$$d\Delta_{\sum_{i=1}^m C\cap A}+Y'$$
where $Y'$ is supported on $(\sum_{i=1}^m C\cap A)\times (\sum_{i=1}^{m-1}C\cap A)$.

Here $d$ is a positive natural number. Also here the intersection product for the cycles of form $z\times (\sum_{i=1}^m C\cap A)$ and $(j\times \id)^*(\Gamma_1')$ holds for $\sum_{i=1}^m C\cap A\times \sum_{i=1}^m C\cap A$ by the Lemmas' \ref{lemma1},\ref{lemma2},\ref{lemma3}. Let us give the details. Let us consider a  zero cycle $z$ on $\CH_0(\sum_{i=1}^m C\cap A)$. Then consider the cycles $z\times \sum_{i=1}^m C\cap A$ and $d\Delta_{\sum_{i=1}^m C\cap A}+Y$. Since the inverse image $\Sym^{m}C\cap A'$ is smooth, Chow moving lemma holds on the self product $\Sym^{m}C\cap A'\times \Sym^{m}C\cap A'$. Let $f_1$ be the map from $A'\to A$. Then we define the product (as in the beginning of Theorem \ref{theorem1})
$$(z\times \sum_{i=1}^m C\cap A).(d\Delta_{\sum_{i=1}^m C\cap A}+Y)$$
to be
$$:=(f_1\times f_1)_*((f_1\times f_1)^{*}(z\times \sum_{i=1}^m C\cap A).(f_1\times f_1)^{*}(d\Delta_{\sum_{i=1}^m C\cap A}+Y'))$$

Now
$$(f_1\times f_1)^*(d\Delta_{\sum_{i=1}^m C\cap A})=d\Delta_{\Sym^m C\cap A'}+Z'$$
where $Z'$ is supported on the product $\Sym^m C\cap A'\cap E\times \Sym^m C\cap A'\cap E$, where $E$ is the exceptional locus of the birational map $\Sym^g C\to J(C)$.
Let $z'$ be a zero cycle on $\Sym^m C\cap A'$.
Then following the computation as in Theorem \ref{theorem1}, we have that
$$z'.E=0$$
because we can move the support of the zero cycle $z'$ away from $Z'$ in $\Sym^m C\cap A'$. Therefore we get that
$$(z'\times \Sym^m C\cap A').(d\Delta_{\Sym^m C\cap A'}+Z'+Y')$$
is equal to
$$z'\times z'+z'\times Y''$$
where $Y''$ is supported on $\Sym^m C\cap A'\times \Sym^{m-1}C\cap A'$. It is the cycle $(f_1\times f_1)^* Y'$. Then arguing as in Theorem \ref{theorem1} we have
$$(z\times \sum_{i=1}^m C\cap A).(d\Delta_{\sum_{i=1}^m \cap A}+Y')=z\times z+(z\times \sum_{i=1}^m C\cap A).Y'$$
Moreover the cycle $z\times \sum_{i=1}^m C\cap A.Y'$ is of correct codimension.

 Therefore we have
$$\Gamma_{1*}'j_*(z)=z+z'$$
for all $z$ in  $\sum_{i=1}^m C\cap A$, and $z'$ supported on $\sum_{i=1}^{m-1}C\cap A$.

So let
$$\rho:U\to \sum_{i=1}^m C\cap A$$
be the open embedding of the complement of $\sum_{i=1}^{m-1}C\cap A$ in $\sum_{i=1}^m C\cap A$. Then we have
$$\rho^*{\Gamma_1'}_*j_*(z)=\rho^*(dz+z_1)=d\rho^*(z)$$
where $z_1$ is supported on $\sum_{i=1}^{m-1}C\cap A$. This follows since $(j\times id)^*(\Gamma_1')=d\Delta_{\sum_{i=1}^m C\cap A}+Y'$, where $Y'$ is supported on $\sum_{i=1}^m C\cap A\times\sum_{i=1}^{m-1}C\cap A$. Now consider the following commutative diagram.
$$
  \xymatrix{
     \CH_0(\sum_{i=1}^{m-1}C\cap A) \ar[r]^-{j'_{*}} \ar[dd]_-{}
  &   \CH_0(\sum_{i=1}^m\cap A) \ar[r]^-{\rho^{*}} \ar[dd]_-{j_{*}}
  & \CH_0(U)  \ar[dd]_-{}  \
  \\ \\
   \CH_0(\sum_{i=1}^{m-1}C\cap A) \ar[r]^-{j''_*}
    & \CH_0(A) \ar[r]^-{}
  & \CH_0(V)
  }
$$
Here $U,V$ are complements of $\sum_{i=1}^{m-1}C\cap A$ in $\sum_{i=1}^m C\cap A,A$ respectively. Now suppose that $j_*(z)=0$, then from the previous it follows that
$$\rho^*{\Gamma_1'}_*j_*(z)=\rho^*(dz)=0$$
by the localisation exact sequence it follows that there exists
$z'$ supported on $\sum_{i=1}^{m-1}C\cap A$ such that $j'_*(z')=dz$. Now assume the induction hypothesis: that is for all $n\leq m-1$ we have the closed embedding $\sum_{i=1}^n C\cap A\to A$ induces a push-forward at the level of Chow groups of zero cycles whose kernel is torsion. The induction starts with the assumption that $C\cap A$ is irreducible and hence a closed point (because $A$ is a proper abelian subvariety of $J(C)$), therefore arguments as in Theorem \ref{theorem1} would prove that the embedding of this point into $A$, induces an injection at the level of Chow groups of zero cycles.

Now by the commutativity of the previous diagram  it follows that
$$j''_*(z')=0$$
By the induction hypothesis we have  that the closed embedding $\sum_{i=1}^{m-1}C\cap A\to A$, induces a push-forward at the level of Chow groups of zero cycles whose kernel is torsion.  So we get that $d'z'=0$ hence $dd'z=0$. So $j_*$ has torsion kernel.

Now putting $m=g-1$ we get the desired result.

\end{proof}

\begin{theorem}
\label{theorem4}
Let $A$ be a principally polarized abelian variety embedded into the Jacobian $J(C)$ of a smooth projective curve $C$. Suppose that $A$ is a proper subvariety of $J(C)$. Let $\Th_C$ denote the theta divisor of $J(C)$. Pull it back to $A$ and denote the pull-back by $\Th_A$. Suppose that $\Sym^{g-i}C\cap A'$ is singular  for some $i\geq 0$ and suppose also that $C\cap A'$ is  irreducible, where $A'$ is the inverse image of $A$, under the map $\Sym^g C\to J(C)$. Then for the closed embedding $j:\Th_A\to A$,  $j_*$ has torsion kernel at the level of Chow groups of zero cycles.
\end{theorem}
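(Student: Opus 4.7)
The plan is to adapt the inductive argument of Theorem \ref{theorem2} in order to cope with the singularity of $\Sym^{g-i}C\cap A'$. As in Theorem \ref{theorem2}, I would prove by induction on $m\leq g-1$ that the closed embedding $\sum_{i=1}^m C\cap A\hookrightarrow A$ has torsion kernel at the level of $\CH_0$; specialising to $m=g-1$ then yields the statement for $j:\Th_A\to A$. The base case $m=1$ is immediate from the hypotheses: $C\cap A'$ irreducible and $A$ a proper abelian subvariety force $C\cap A$ to be a single closed point, whose push-forward into $A$ is injective on zero cycles by the argument used at the induction base of Theorem \ref{theorem1}.

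In the induction step I would re-use the correspondence $\Gamma_1'$ on $A\times(\sum_{i=1}^m C\cap A)$ from the proof of Theorem \ref{theorem2} and the resulting projection-formula identification of $\Gamma_{1*}'\circ j_*$ with the action of $(j\times \id)^*(\Gamma_1')$. Restricting the identity $(j\times\id)^*(\Gamma_1)=\Delta_{\sum_{i=1}^m C}+Y$ from Theorem \ref{theorem1} along the Cartesian square relating the intersections with $A$ gives
\[
(j\times\id)^*(\Gamma_1')=d\,\Delta_{\sum_{i=1}^m C\cap A}+Y'
\]
with $Y'$ supported on $(\sum_{i=1}^m C\cap A)\times(\sum_{i=1}^{m-1}C\cap A)$ and $d$ a positive integer.

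The hard part, and the reason the smoothness hypothesis is relaxed in the present statement, is that in Theorem \ref{theorem2} the intersection product of $z\times(\sum_{i=1}^m C\cap A)$ with the above class was made sense of by pulling back through $f_1\times f_1$ to the smooth self-product $(\Sym^m C\cap A')\times(\Sym^m C\cap A')$ and applying Chow's moving lemma there. When $\Sym^m C\cap A'$ is singular, moving is no longer directly available on that self-product. My remedy is to further pull back through a resolution of singularities $\sigma_m:Y_m\to \Sym^m C\cap A'$ (available in characteristic zero by Hironaka). The smooth self-product $Y_m\times Y_m$ carries Chow moving, so after first moving the support of $z$ away from the exceptional loci of both $f_1$ and $\sigma_m$ using Lemma \ref{lemma1}, one performs the intersection on $Y_m\times Y_m$ and pushes forward along $(f_1\circ\sigma_m)\times(f_1\circ\sigma_m)$. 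Since $\sigma_m\times\sigma_m$ is proper and birational between smooth schemes, the composition $\sigma_{m*}\sigma_m^*$ acts as the identity on zero cycles lying in the smooth locus, so the extra resolution step only introduces a new positive integer multiplier that can be absorbed into $d$.

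Once the intersection product is well-defined and evaluates, up to the multiplier $d$, to $z$ plus a cycle supported on $\sum_{i=1}^{m-1}C\cap A$, the remainder of the induction is identical to Theorem \ref{theorem2}. Restriction to the open complement of $\sum_{i=1}^{m-1}C\cap A$ together with the localization exact sequence reduces $j_*(z)=0$ to $j'_*(z')=dz$ for some $z'$ supported on $\sum_{i=1}^{m-1}C\cap A$; the inductive torsion-kernel hypothesis applied to $j''_*(z')$ then gives $d'z'=0$, hence $dd'z=0$, i.e.\ $j_*$ has torsion kernel. Taking $m=g-1$ completes the proof of Theorem \ref{theorem4}.
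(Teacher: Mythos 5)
Your proposal is correct and follows essentially the same route as the paper's own proof: the paper likewise replaces the singular $\Sym^m C\cap A'$ by a resolution of singularities $\wt{\Sym^m C\cap A'}$, applies the Chow moving lemma on the smooth self-product to move the zero cycle away from both the exceptional divisor of the resolution and the preimage of the exceptional locus of $\Sym^g C\to J(C)$, and then runs the identical localization/induction argument of Theorem \ref{theorem2} to conclude $dd'z=0$. (The only cosmetic difference is that on zero cycles supported in the smooth locus the step $\sigma_{m*}\sigma_m^*$ is literally the identity, so no extra multiplier actually needs to be absorbed into $d$.)
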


\begin{proof}
The proof here goes similarly as in Theorem \ref{theorem2} but the only matter of concern is the injectivity of $\CH_0(\sum_{l=1}^{m}C\cap A)\to \CH_0(A)$ for $m\leq g-1$, where $\Sym^m C\cap A'$ is singular. So suppose that $\Sym^m C\cap A'$ is singular. Let $\wt{\Sym^{m}C\cap A'}$ is the resolution of singularities of it.  Then considering embedding $j':\Sym^m C\cap A'\to  A'$ and the induced embedding $j:\sum_{l=1}^m C\cap A\to A$ we have (following the arguments of Lemma \ref{lemma3}) $\Gamma'_{1*}j_*$ is induced by $(j\times \id)^*(\Gamma'_1)$. Following Theorem \ref{theorem2} we have
$$(j\times \id)^*(\Gamma'_1)=d\Delta_{\sum_{l=1}^m C \cap A}+Y$$
where $\Delta_V$ is the diagonal of $V$ and $Y$ is supported on $\sum_{l=1}^m C\cap A\times \sum_{l=1}^{m-1}C\cap A$. Let's put $V=\sum_{l=1}^m C\cap A$. Now as before consider  the cycles $z\times V$ and $d\Delta_{V}+Y$. Since the variety $\wt{\Sym^{m}C\cap A'}$ is smooth, Chow moving lemma holds on the self product $\wt{\Sym^m C\cap A'}\times \wt{\Sym^{m}C\cap A'}$ and we have the inverse images (as defined in the proof of theorem \ref{theorem1},under the map from $\wt{\Sym^{m}C\cap A'}\times \wt{\Sym^{m}C\cap A'}$ to $\sum_{l=1}^m C\cap A\times \sum_{l=1}^m C\cap A$) of $z\times V$ and $d\Delta_{V}+Y$ intersect properly on the product. Let $f_1$ be the map from $\wt{\Sym^m C\cap A'}\to \sum_{l=1}^m C\cap A$. Then we define the product
$$(z\times V).(d\Delta_{V}+Y)$$
to be
$$:=(f_1\times f_1)_*((f_1\times f_1)^{*}(z\times V).(f_1\times f_1)^{*}(d\Delta_{V}+Y))\;.$$

Note that
$$(f_1\times f_1)^{*}(d\Delta_{V}+Y)=\Delta_{\wt{\Sym^{m}C\cap A'}}+{Z'}+ Z''+Y'$$
where $Z'$ is supported on the inverse image of  $E'\times E'$ where, $E'=E\cap \Sym^{m}C\cap A'$ under the map $\wt{\Sym^m C\cap A'}\times \wt{\Sym^m C\cap A'}\to \Sym^m C\cap A'\times \Sym^m C\cap A'$. The cycle $Z''$ is supported on $E''\times E''$, $E''$ being the exceptional divisor associated with the resolution of singularities of $\Sym^m C\cap A'$. Suppose that $(f_1\times f_1)^{*}(z\times V)=z'\times \wt{\Sym^{m}C\cap A'}$, where $z'$ is a zero cycle. Then the intersection:
$$(f_1\times f_1)^{*}(z\times V).(f_1\times f_1)^{*}(d\Delta_{V}+Y)$$
$$=z'\times z'+(z'\times \wt{\Sym^{m}C\cap A'}).({Z'}+Z'')+Y'.(z'\times \Sym^{m-1}C\cap A')\;.$$
Here  $Y'$ is supported on $(\Sym^{m}C\cap A')\times(\Sym^{m-1}C\cap A')$. Then moving the support away from $E''$ we have $z.E''=0, z.\hat{E'}=0$ ($\hat{E'}$ is the inverse image of $E'$ under the map $\wt{\Sym^m C\cap A'}\to \Sym^m C\cap A'$). Hence the above intersection comes to
$$z'\times z'+Y'.(z'\times \wt{\Sym^{m}C\cap A'})$$
similar reasons as in Theorem \ref{theorem1} we have the second part $Y'.(z'\times \wt{\Sym^{m}C\cap A'})$ is of correct codimension. Therefore
$$(f_1\times f_1)_*(z'\times z'+Y'.(z'\times \Sym^{m-1}C\cap A'))$$
is well defined.

Therefore we have
$$\Gamma_{1*}'j_*(z)=z+z'$$
for all $z$  in $\sum_{l=1}^m C\cap A$ and $z'$ supported on $\sum_{l=1}^{m-1}C\cap A$.

So let
$$\rho:U\to \sum_{l=1}^m C\cap A$$
be the open embedding of the complement of $\sum_{l=1}^{m-1}C\cap A$ in $\sum_{l=1}^m C\cap A$. Then we have
$$\rho^*{\Gamma_1'}_*j_*(z)=\rho^*(dz+z_1)=d\rho^*(z)$$
where $z_1$ is supported on $\sum_{l=1}^{m-1}C\cap A$. This follows since $(j\times id)^*(\Gamma_1')=d\Delta_{\sum_{i=1}^m C\cap A}+Y$, where $Y$ is supported on $\sum_{l=1}^m C\cap A\times\sum_{l=1}^{m-1}C\cap A$.

Now consider the following commutative diagram.
$$
  \xymatrix{
     \CH_0(\sum_{l=1}^{m-1}C\cap A) \ar[r]^-{j'_{*}} \ar[dd]_-{}
  &   \CH_0(\sum_{l=1}^m C\cap A) \ar[r]^-{\rho^{*}} \ar[dd]_-{j_{*}}
  & \CH_0(U)  \ar[dd]_-{}  \
  \\ \\
   \CH_0(\sum_{l=1}^{m-1}C\cap A) \ar[r]^-{j''_*}
    & \CH_0(A) \ar[r]^-{}
  & \CH_0(V)
  }
$$

Here $U,V$ are complements of $\sum_{l=1}^{m-1}C\cap A$ in $\sum_{l=1}^m C\cap A,A$ respectively. Now suppose that $j_*(z)=0$, then from the previous it follows that
$$\rho^*{\Gamma_1'}_*j_*(z)=\rho^*(dz)=0$$
by the localisation exact sequence it follows that there exists
$z'$ supported on $\sum_{l=1}^{m-1}C\cap A$ such that $j'_*(z')=dz$. Now assume by induction hypothesis that for all $n\leq m-1$ we have the closed embedding $\sum_{l=1}^n C\cap A\to A$ induces a push-forward at the level of Chow groups of zero cycles whose kernel is torsion. The induction starts with the assumption that $C\cap A$ is irreducible and hence a closed point (because $A$ is a proper abelian subvariety of $J(C)$), therefore arguments as in Theorem \ref{theorem1} would prove that the embedding of this point into $A$, induces an injection at the level of Chow groups of zero cycles.

Now by the commutativity of the previous diagram  it follows that
$$j''_*(z')=0$$
By the induction hypothesis we have  that the closed embedding $\sum_{l=1}^{n}C\cap A\to A$, for $n\leq m-1$ induces a push-forward at the level of Chow groups of zero cycles whose kernel is torsion.  So we get that $d'z'=0$ hence $dd'z=0$. So $j_*$ has torsion kernel.

\end{proof}

The previous theorem gives rise to the following corollary:

\begin{corollary}
Let $\wt{C}\to C$ be an unramified double cover of smooth projective curves such that the genus of $C$ is atleast $2$. Consider the Prym variety associated to this double cover, denote by $P(\wt{C}/C)$. Consider the embedding of $P(\wt{C}/C)$ into $J(\wt{C})$. Let $\Th'$ be the pullback of the theta divisor on $J(\wt{C})$ to $P(\wt{C}/C)$. Then the closed embedding $\Th'\to P(\wt{C}/C)$ induces a pushforward homomorphism at the level of Chow groups which has torsion kernel.
\end{corollary}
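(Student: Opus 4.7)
The plan is to deduce the corollary by applying Theorem \ref{theorem4} with $C$ replaced by $\wt{C}$ and $A = P(\wt{C}/C)$. Set $g = g(\wt{C})$. By Riemann--Hurwitz for the unramified double cover, $g = 2g(C) - 1$, so $\dim P(\wt{C}/C) = g(C) - 1 < 2g(C) - 1 = \dim J(\wt{C})$ whenever $g(C) \geq 2$; hence $P(\wt{C}/C)$ is a proper subvariety of $J(\wt{C})$, and the first hypothesis of Theorem \ref{theorem4} holds.

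To handle the base case of the induction, I would control $\wt{C} \cap A'$, where $A'$ is the preimage of $P(\wt{C}/C)$ under $\Sym^g \wt{C} \to J(\wt{C})$. A point $x \in \wt{C}$ (viewed in $\Sym^g \wt{C}$ via $x \mapsto [x, P_0, \ldots, P_0]$) lies in $A'$ iff $\Nm(x - P_0) = 0$ in $J(C)$, i.e., $\pi(x) = \pi(P_0)$ on $C$, where $\pi : \wt{C} \to C$ is the cover. Since $\pi$ has no ramification, the solutions form the reduced finite scheme $\{P_0, \sigma(P_0)\}$ for the covering involution $\sigma$, which is not literally irreducible in general. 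The base of the induction in Theorem \ref{theorem4} used irreducibility only to reduce to an embedding of a single closed point; but the same conclusion holds for a finite reduced scheme inside an abelian variety, since distinct closed points of $A$ are never rationally equivalent (via the Albanese decomposition of $\CH_0(A)$). So the base step still yields a $\CH_0$-injection, and the inductive mechanism of Theorem \ref{theorem4} runs verbatim.

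For the singularity hypothesis, one needs $\Sym^{g - i}\wt{C} \cap A'$ to be singular for some $i \geq 0$. This is guaranteed by the classical singularity theory of Prym theta divisors due to Mumford and Beauville: for any unramified double cover of smooth curves with $g(C) \geq 2$, the Prym theta divisor $\Xi := \Th_{\wt{C}} \cap P(\wt{C}/C)$ has non-empty singular locus (for example, the loci of line bundles of higher section contribute singular points via the Prym--Brill--Noether description). Pulling a singular point of $\Xi$ back through the Abel--Jacobi map $\Sym^{g-1}\wt{C} \to \Th_{\wt{C}}$ furnishes singular points of $\Sym^{g-1}\wt{C} \cap A'$, so the final hypothesis of Theorem \ref{theorem4} is met with $i = 1$.

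With the three hypotheses in place, Theorem \ref{theorem4} applies directly and yields that $j_{*} : \CH_0(\Th') \to \CH_0(P(\wt{C}/C))$ has torsion kernel. The most delicate point in this plan is the irreducibility condition, because $\wt{C}$ genuinely meets the Prym in two points rather than one; the cleanest way to handle this is to observe that the proof of the base step of Theorem \ref{theorem4} only required $\CH_0$-injectivity of a finite reduced zero-cycle into the abelian variety, which is automatic. The remaining ingredients --- the dimension count and the existence of singularities on the Prym theta divisor --- are standard.
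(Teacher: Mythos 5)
Your overall strategy --- verifying the hypotheses of Theorem \ref{theorem4} for $A=P(\wt{C}/C)$ inside $J(\wt{C})$ --- is the same as the paper's, and your dimension count for properness is fine. The genuine gap is in your treatment of the base case. Having correctly observed that $\wt{C}\cap A'$ gives two distinct points $x,y$ of $A$ rather than one, you assert that the pushforward $\CH_0(\{x,y\})\cong\ZZ\oplus\ZZ\to\CH_0(A)$ behaves well ``since distinct closed points of $A$ are never rationally equivalent.'' That fact only shows $(1,-1)$ is not in the kernel. Every kernel element has degree zero, hence is of the form $m([x]-[y])$, and the kernel is torsion (equivalently, zero) if and only if the class $[x]-[y]$ has \emph{infinite} order in $\CH_0(A)$: if $[x]-[y]$ were a nonzero torsion class of order $d$, the kernel would be $d\ZZ\cdot(1,-1)$, an infinite cyclic group, which is not torsion, and the induction of Theorem \ref{theorem4} could not even start. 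This is not a vacuous worry: by Roitman's theorem torsion classes correspond to torsion points of $A$, and here the two points are of the form $t$ and $-t$ with $\Nm(2t)=0$, so one must actually rule out $t$ giving rise to a torsion class. The paper spends the entire second half of its proof of this corollary on exactly this point: it identifies $\wt{C_p}\cap P'$ with the pair $\{x-p,\,i(x)-i(p)\}=\{t,-t\}$ via an irreducibility argument on the family of linear systems $\bcL((4g-2)p')$, and then gives a separate degree-plus-Roitman argument that $n[t]-n[-t]=0$ forces $n=0$. You need some argument of this kind; it is the crux of the corollary, not an automatic fact.

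A secondary issue: for the singularity hypothesis of Theorem \ref{theorem4} you invoke the claim that the Prym theta divisor has non-empty singular locus whenever $g(C)\geq 2$, which fails in low dimension (for small $g(C)$ the Prym is a principally polarized abelian variety of small dimension whose theta divisor may well be smooth), and in any case a singular point of $\Th'$ need not lift to a singular point of $\Sym^{g-1}\wt{C}\cap A'$ --- stable singularities of theta divisors come precisely from positive-dimensional fibres of the Abel--Jacobi map, over which the symmetric power can be smooth. The paper instead notes that $A'$ itself is a connected double cover of $\PR^{g(C)-1}$ and hence cannot be smooth (an \'etale cover of projective space would be trivial, forcing the Prym to be a point), which settles the hypothesis already at $i=0$. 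This second point is less damaging, since Theorems \ref{theorem2} and \ref{theorem4} together cover both the everywhere-smooth and somewhere-singular alternatives, but as written your justification is incorrect.
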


\begin{proof}
We need to check that the Prym variety and the restriction of the theta divisor on $J(\wt{C})$ satisfies the assumption of Theorem \ref{theorem4}.

Let $g$ be the genus of $C$. So by Riemann-Hurwitz formula the genus of $\wt{C}$ is $2g-1$. We have the following commutative square.

$$
  \diagram
  \Sym^{2g-1} \wt{C}\ar[dd]_-{\wt{q}} \ar[rr]^-{} & & \Sym^{2g-1} C \ar[dd]^-{q_C} \\ \\
  J(\wt{C}) \ar[rr]^-{} & & J(C)
  \enddiagram
  $$
 Then first of all the Prym variety is the image under $\wt{q}$ of the double cover $P'$ of a projective space $\PR^{g-1}$. This follows from the Riemann-Roch and the very definition of the Prym variety. Now consider the intersection of $\Sym^{2g-i}\wt{C}$ with $P'$, where $i\geq 2$. This intersection  may be singular for a general copy of $\Sym^{2g-i}\wt{C}$ in $\Sym^{2g-1}\wt{C}$ but they fits into a family of algebraic varities in the following way. Consider the family
 $$\bcU:=\{([x_1,\cdots,x_{2g-2},x_{2g-1}],p)\in P'\times \wt{C}|p\in [x_1,\cdots,x_{2g-1}]\}$$
 and the projection from $\bcU$ to $\wt{C}$. Then $\bcU$ is a family over $\wt{C}$ with fibers $\Sym^{2g-2}\wt{C_p}\cap P'$ over $\wt{C}$, for $p\in \wt{C}$ and $\wt{C_p}\cong \wt{C}$. Now $P'$ is an unramified double cover of the projective space  $\PR^{g-1}$. Therefore $P'$ cannot be non-singular because if it is non-singular then $P'$ is an \'etale cover of $\PR^{g-1}$ making it isomorphic to $\PR^{g-1}$. Then $\PR^{g-1}$ maps to an abelian variety so the map should be constant making the Prym variety trivial, which is not the case.

Therefore $\bcU$ is a singular variety and its tangent space at a point is a hyperplane in the product
  of tangent spaces
  $$T_{[x_1,\cdots,x_{2g-1}]}P'\times T_p(\wt{C})\;.$$
So we resolve the singularity of $\bcU$ and construct a non-singular variety $\bcU'$. Consider the map from $\bcU'\to \wt C$.

Following the theorem 2.27 in \cite{Shafarevich}, over a Zariski open, non-empty subset $V$ of $\wt{C}$, the map $\bcU'_V:=\bcU'\times _{\wt{C}}V\to \wt{V}$ is a submersion, hence the fibers $\bcU'_p$ over $p\in V$ are non-singular subvarities of $\bcU'_V\to V$.

Hence  for a general $p$ in $\wt{C}$, $\bcU'_p$ is smooth which is nothing but a resolution of singularities of $\Sym^{2g-2}\wt C_p\cap P'$.

 Similarly we can prove that for a general $\Sym^{2g-i}\wt{C_p}\cap P'$ its resolution of singularity also fits into a family of algebraic varieties.

 Also considering the family $$\bcU':=\{([x_1,x_2,\cdots,x_{2g-1}],p)\in P'\times \wt{C}|x_2=x_3=\cdots=x_{2g-1}=p\}$$
the fiber of $\bcU'\to \wt{C}$ is isomorphic to $\wt{C_p}\cap P'$ for any point $p\in \wt{C}$, here $\wt{C_p}\cong \wt{C}$. The collection of points $x$ in $\wt{C_p}\cap P'$
satisfies that
$$(i(x)-i(p))=(-x+p)$$
or $(i(x)+x)=(i(p)+p)$ on $J(\wt{C})$ for a fixed point $p\in \wt{C}$. Therefore on $J(C)$ we have
$$2x'=2p'\;.$$
Here $x',p'$ are the images of $x,p$ under the quotient map from $\wt{C}$ to $C$.

Conversely suppose that we have for a point $x'$ in ${C}$ such that
$$2(x'-p')=0$$
Consider the pre-images $x,i(x)$ of $x'$ and $p,i(p)$ of $p$ in $\wt{C}$. Then
$$(x-p)+(i(x)-i(p))$$
is mapped to
$$2(x'-p')=0\;.$$
Therefore we have $x, i(x)\in \wt{C_p}\cap P'$. Therefore
$\wt{C_p}\cap P'/i$ is in bijection with $\bcL((4g-2)p')\cap C$, where $\bcL((4g-2)p')$ is the linear system associated to the cycle $(4g-2)p'$. By Riemann-Roch this linear system is of dimension $3g-1$. Therefore we have $\Sym^{4g-2}C$ is a projective bundle over $J(C)$. Now consider the family
$$\bcU=\{((4g-2)p,q)|q\in \bcL((4g-2)p)\}\subset \Sym^{4g-2}C\times C$$
over $\Sym^{4g-2}C$. Then for a $p$ in $C$ realised as an element of $\Sym^{4g-2}C$, the inverse image under the above projection is $C\cap \bcL((4g-2)p)$. Now the above family can be realised as the pull-back  to $C$ of the projective bundle  on $\Sym^{4g-2}C$ given by
$$\{(\sum_{i=1}^{4g-2}P_i,\sum_{j=1}^{4g-2}Q_j)|\sum_{j=1}^{4g-2}Q_j\in \bcL(\sum_{i=1}^{4g-2}P_i)\}\subset \Sym^{4g-2}C\times \Sym^{4g-2}C\;.$$
Therefore $\bcU$ is irreducible, hence for a general $p$, $\bcL((4g-2)p)\cap C$ is irreducible, \cite{Shafarevich}[theorem 2.26].

Hence it is a point, otherwise we have $C\subset \bcL((4g-2)p')$, which in turn gives us that all points of $\wt{C}$ are $i$-antiinvariant. Therefore $C$ is rational contradicting the assumption that genus of $C$ is atleast $2$. Since $\wt{C_p}\cap P'/i$ is in bijection with $\bcL((4g-2)p')\cap C$, we have $\wt{C_p}\cap P'$ consists of two points say $x,i(x)$. We have
$$i(x)-i(p)=-x+p\;.$$
We have to prove that the pushforward from $\CH_0(\{x-p,i(x-p)\})\cong \ZZ\oplus \ZZ$ to $\CH_0(P(\wt{C}/C))$ is injective. Consider $z$ in the  Chow group $\CH_0({x-p,i(x-p)})$, such that $j_*(z)$ is rationally equivalent to zero on $P(\wt{C}/C)$. Here $j$ is the closed embedding of the two points $\{x-p,i(x-p)\}$ into $P(\wt{C}/C)$. Then by the definition of rational equivalence there exist finitely many smooth projective curves $X_i$ in $P(\wt{C}/C)$ and rational functions $f_i$ on $X_i$ such that
$$j_*(z)=\sum_i div(f_i)$$
Since $div(f_i)$ has degree zero for each $i$, we have $j_*(z)$ is degree zero, hence so is $z$. So $z$ will look like
$$n(x-p)-ni(x-p)\;.$$
Now from the previous we have that the zero cycle
$$n(x-p)+ni(x-p)$$
maps to zero under the albanese map. So combining the previous two we have
$$2n(x-p)$$
maps to zero under the albanese map. So the albanese image of $x-p$ is $2n$-torsion. Therefore by the Roitman's torsion theorem the zero cycle $x-p$ is a $2n$-torsion. Therefore the zero cycle of $i(x-p)$ is also $2n$-torsion. But then by the definition of rational equivalence it follows that the degree of the cycles
$$2n(x-p)$$
and
$$2ni(x-p)$$
are zero. Which means that $n=0$. Therefore $z=0$ in $\CH_0(\{x-p,i(x-p)\})$.

 Hence we have the assumption of the Theorem  \ref{theorem4} is satisfied, whence the conclusion follows.
\end{proof}

Now we prove that if we blow up $J(C)$ at finitely many points and denote the blow up by $\wt{J(C)}$ and let $\wt{\Th_C}$ denote the strict transform of $\Th_C$, then the closed embedding of $\wt{\Th_C}$ into $\wt{J(C)}$ induces injective push-forward homomorphism at the level of Chow groups.

\begin{theorem}
\label{theorem3}
Let $\wt{J(C)}$ be the blow up of $J(C)$ at some non-singular subvariety $Z$ whose inverse image is $E$. Let $Z'$ be the inverse image of $Z$ under the map $\Sym^g C\to J(C)$. Consider the blow-up of $\Sym^g C$ along $Z'$, denote it by $\wt{\Sym^g C}$. Suppose that the blow-up of $\Sym^{g-i} C$ along $Z'\cap \Sym^{g-i}C$ (that is the strict transform of $\Sym^{g-i}C$ under the above blow-up) is smooth.  Let $\wt{\Th_C}$ denote the strict transform of $\Th_C$ in $\wt{J(C)}$. Then the closed embedding of $\wt{\Th_C}$ into $\wt{J(C)}$ induces injective push-forward homomorphism at the level of Chow groups of zero cycles with $\QQ$ coefficients.
\end{theorem}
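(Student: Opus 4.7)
The plan is to mirror the inductive strategy of Theorems \ref{theorem2} and \ref{theorem4} on the blow-ups. Concretely, I will prove by induction on $m$ that for every $0\leq m\leq g-1$ the closed embedding of the strict transform $\wt{\sum_{i=1}^m C}\hookrightarrow \wt{J(C)}$ induces an injection at the level of $\CH_0(-)\otimes \QQ$, and then specialise to $m=g-1$ to obtain the statement for $\wt{\Th_C}$. Passing to $\QQ$-coefficients is essential, because the Theorem \ref{theorem1}--style computation of the diagonal correspondence on the blown-up Jacobian will only recover $d\,\Delta_{\wt{\sum_{i=1}^m C}}+Y$ for some positive integer $d$ rather than $\Delta+Y$ itself.

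For the induction step, I would form the correspondence $\Gamma$ on $\wt{\Sym^g C}\times \wt{\Sym^m C}$ given by the strict transform of $\pi_g\times\pi_m(\mathrm{Graph}(\pr))$, and push it forward by $\wt{f_2}\times \wt{f_1}$ to a correspondence $\Gamma_1$ on $\wt{J(C)}\times \wt{\sum_{i=1}^m C}$. One then defines the intersection products exactly as in Lemmas \ref{lemma4}, \ref{lemma2}, \ref{lemma3} by pulling back to the smooth model $\wt{\Sym^g C}\times \wt{\Sym^m C}$. Here the hypothesis that the strict transform of each $\Sym^{g-i}C$ is smooth guarantees that the Chow moving lemma is available on every self-product $\wt{\Sym^m C}\times \wt{\Sym^m C}$ that appears, and therefore that the intersection $(f_1\times f_1)^{*}(z\times \wt{\sum_{i=1}^m C}).(f_1\times f_1)^{*}(j\times\id)^*(\Gamma_1)$ makes sense and computes $\Gamma_{1*}j_*(z)$ on $\QQ$-coefficients.

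The key computation is then to show, exactly as in Theorem \ref{theorem2}, that
$$(j\times\id)^*(\Gamma_1)=d\,\Delta_{\wt{\sum_{i=1}^m C}}+Y$$
where $Y$ is supported on $\wt{\sum_{i=1}^m C}\times \wt{\sum_{i=1}^{m-1} C}$. The new contributions in the pullback to $\wt{\Sym^m C}\times \wt{\Sym^m C}$ are concentrated either on the old exceptional locus $E\times E$ of $\Sym^m C\times \Sym^m C\to \sum_{i=1}^m C\times\sum_{i=1}^m C$ or on the new exceptional divisor of the blow-up along $Z'\cap\Sym^m C$, and both can be moved away from a zero-cycle $z'$ by the Chow moving lemma; after division by $d\in\QQ^{\times}$ this gives $\Gamma_{1*}j_*(z)\equiv z$ modulo cycles supported on $\wt{\sum_{i=1}^{m-1}C}$. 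Plugging this into the localisation diagram
$$
  \xymatrix{
     \CH_0(\wt{\sum_{i=1}^{m-1}C})_\QQ \ar[r]^-{j'_{*}} \ar[dd]
  &   \CH_0(\wt{\sum_{i=1}^m C})_\QQ \ar[r]^-{\rho^{*}} \ar[dd]^-{j_{*}}
  & \CH_0(U)_\QQ  \ar[dd]
  \\ \\
   \CH_0(\wt{\sum_{i=1}^{m-1}C})_\QQ \ar[r]^-{j''_*}
    & \CH_0(\wt{J(C)})_\QQ \ar[r]
  & \CH_0(V)_\QQ
  }
$$
and invoking the induction hypothesis finishes the step; the base case $m=0$ is the injectivity for embeddings of finitely many points into $\wt{J(C)}$, which reduces to the Theorem \ref{theorem1} argument via smooth projective curves avoiding the blow-up centre.

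The main obstacle is bookkeeping the exceptional divisor $E$ of the blow-up inside the intersection-product computations: one must check that strict transforms of the curves $C_i$ used in the rational-equivalence argument of Sublemma \ref{sublemma1} can be chosen generically transverse to $E$, so that the identities $j_*'f_1^*(z)=f_2^*j_*(z)$ and the Collino-style computation of $(j\times\id)^*(\Gamma_1)$ both go through on $\wt{J(C)}$. Once this transversality and the smoothness hypothesis on the blown-up $\Sym^{g-i}C$ are in hand, the rest of the proof is a routine transcription of Theorem \ref{theorem2}, with the factor $d$ inverted using $\QQ$-coefficients to upgrade the conclusion from torsion kernel to injectivity.
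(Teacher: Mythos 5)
Your proposal follows essentially the same route as the paper's proof: the same induction on $m$ for the strict transforms $\wt{\sum_{i=1}^m C}\hookrightarrow\wt{J(C)}$, the same reduction of all intersection products to the smooth models $\wt{\Sym^m C}\times\wt{\Sym^m C}$ (using the smoothness hypothesis on the blown-up symmetric powers for the moving lemma), and the same localisation diagram plus induction hypothesis to conclude. The only cosmetic differences are that the paper constructs its correspondence by pulling back $\Gamma_1$ along $\pi\times\pi'$ rather than by taking a strict transform of the graph correspondence, and its computation yields the diagonal with coefficient $1$ (the coefficient $d$ you allow for arises in Theorems \ref{theorem2} and \ref{theorem4} only from restricting to the abelian subvariety $A$, which does not occur here), so inverting $d$ over $\QQ$ is harmless but not actually needed at that step.
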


\begin{proof}
We prove it by induction. That is we prove that the closed embedding of the strict transform $\wt{\sum_{i=1}^m C}$ of $\sum_{i=1}^m C$ into $\wt{J(C)}$ induces injective push-forward at the level of Chow group of zero cycles for all $m\leq g-1$.
Let us consider $\pi$ to be the morphism from $\wt{J(C)}$ to $J(C)$. Consider the correspondence $(\pi\times \pi')^*(\Gamma_1)$, where $\pi'$ is the restriction of $\pi$ to $\wt{\sum_{i=1}^m C}$. Call this correspondence $\Gamma'$. Then $\Gamma'_*\wt{j}_*$ is induced by $(\wt{j}\times id)^*\Gamma'$ (following Lemma \ref{lemma3}), where $\wt{j}$ is the closed embedding of $\wt{\sum_{i=1}^m C}$ into $\wt{J(C)}$. Consider the commutative square.
$$
  \diagram
   \wt{\sum_{i=1}^m C}\times \wt{\sum_{i=1}^m C}\ar[dd]_-{\pi'\times \pi'} \ar[rr]^-{\wt{j}\times id} & & \wt{J(C)}\times \wt{J(C)} \ar[dd]^-{\pi\times \pi} \\ \\
  \sum_{i=1}^m C\times \sum_{i=1}^m C \ar[rr]^-{j\times id} & & J(C)\times J(C)
  \enddiagram
  $$

This gives us that
$$(\wt{j}\times \id)^*\Gamma'=(\pi'\times \pi')^*(j\times id)^*\Gamma_1=(\pi'\times \pi')^*(\Delta_{\sum_{i=1}^m C}+Y)$$
where $Y$ is supported on $\sum_{i=1}^m C\times \sum_{i=1}^{m-1}C$. Now
$$(\pi'\times \pi')^*(\Delta_{\sum_{i=1}^m C})=\Delta_{\wt{\sum_{i=1}^m C}}+V$$
where $E$ is the exceptional locus of $\pi$ and $V$ is supported on $(E\cap \wt{\sum_{i=1}^m C})\times (E\cap \wt{\sum_{i=1}^m C})$.

Let $\wt{\Sym^{m}C}$ denote the fiber product of $\Sym^{m}C$ and $\wt{\sum_{i=1}^m C}$ over $J(C)$. Since $\Sym^{m}C$ is birational to $\sum_{i=1}^m C$, we have that $\wt{\Sym^{m}C}$ is birational to $\wt{\sum_{i=1}^m C}$. Now $\wt{\Sym^{m}C}$ is a blow-up of $\Sym^{m}C$ along $Z'\cap \Sym^{m}C$, which we assume to be smooth. Therefore the Chow moving lemma holds on $\wt{\Sym^{m}C}\times \wt{\Sym^{m}C}$, therefore arguing as in Theorem \ref{theorem1}, we can define the  intersection of two cycles on $\wt{\sum_{i=1}^m C}\times \wt{\sum_{i=1}^m C}$. So let us consider the cycles $\Delta_{\wt{\sum_{i=1}^m C}}+V+Y$ and $z\times \wt{\sum_{i=1}^m C}$. We consider the pullback of them by $f\times f$, where $f$ is the birational map from $\wt{\Sym^{m}C}\to \wt{\sum_{i=1}^m C}$. Then first note that
$$(f\times f)^*(\Delta_{\wt{\sum_{i=1}^m C}}+V+Y)=\Delta_{\wt{\Sym^{m}C}}+(f\times f)^*(V)+V'+Y'$$
where $V'$ is supported on the exceptional divisor $E'$  of the map $\wt{\Sym^{m}C}\to \wt{\sum_{i=1}^m C}$ and $Y'$ is supported on $\wt{\Sym^{m}C}\times \wt{\Sym^{m-1}C}$.  Then as before following computation as in Theorem \ref{theorem4} for a zero cycle $z$ on $\wt{\sum_{i=1}^m C }$ we have
$$(f\times f)_*((f\times f)^*(\Delta_{\wt{\sum_{i=1}^m C}}+V+Y).(f\times f)^*(z\times \wt{\sum_{i=1}^m C}))=z\times z+Y.(z\times \wt{\sum_{i=1}^m C})\;.$$

Consider the following commutative diagram.
$$
  \xymatrix{
     \CH_0(A) \ar[r]^-{\wt{j'}_{*}} \ar[dd]_-{}
  &   \CH_0(\wt{\sum_{i=1}^m C}) \ar[r]^-{\rho_0^{*}} \ar[dd]_-{\wt{j}_{*}}
  & \CH_0(U)  \ar[dd]_-{}  \
  \\ \\
   \CH_0(A) \ar[r]^-{\wt{j''}_*}
    & \CH_0(\wt J(C)) \ar[r]^-{}
  & \CH_0(V)
  }
$$
Here $A= \wt{\sum_{i=1}^{m-1}C}$.
Now suppose that $\wt{j}_*(z)=0$. By the previous computation we get that $\rho^*(z)=0$, so by the localisation exact sequence we get that there exists $z'$ in $\CH_*(A)$ such that $\wt{j'}_*(z')=z$.

Assume that for all $n\leq m-1$, we have the map $\CH_0(\wt{\sum_{i=1}^n C})\to \CH_0(J(C))$ is injective.  The induction starts because $\wt{p}$ for a closed point on $J(C)$, is either the point itself or it is empty, so argument similar to Theorem \ref{theorem1} applies here.

By the induction  hypothesis $\CH_*(A)\to \CH_*(\wt{J(C)})$ is injective. So we get that $z'=0$, hence $z=0$ giving $\wt{j_*}$ injective.

Putting $m=g-1$ we get the result.

\end{proof}

Now let $A$ be an abelian surface which is embedded in some $J(C)$. Let $i$ denote the involution of $A$. Then $i$ has $16$ fixed points. We blow up $A$ along these fixed points. Then we get $\wt{A}$ on which we have an induced involution, call it $i$. Let $\wt{\Th_A}$ denote the strict transform of $\Th_A$ in $\wt{A}$. Suppose that the abelian surface $A$ intersects the curve $C$ at one point. Then the techniques  as in Theorem \ref{theorem3}   tells us the following.

\begin{theorem}
Let $A$ be as above. Then the closed embedding of $\wt{\Th_A}$ into $\wt{A}$ induces injective push-forward homomorphism at the level of Chow groups of zero cycles.
\end{theorem}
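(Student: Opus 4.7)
The plan is to mimic the inductive strategy of Theorem \ref{theorem3}, proving for each $m\leq g-1$ that the closed embedding of the strict transform $\wt{\sum_{i=1}^m C\cap A}$ into $\wt A$ induces an injective pushforward on $\CH_0$. Let $\pi:\wt A\to A$ denote the blow-up at the sixteen two-torsion points of the involution, and let $\pi'$ be its restriction to $\wt{\sum_{i=1}^m C\cap A}$. The correspondence $\wt\Gamma=(\pi\times\pi')^*\Gamma_1'$ on $\wt A\times\wt{\sum_{i=1}^m C\cap A}$, with $\Gamma_1'$ the correspondence constructed in Theorem \ref{theorem4}, plays the role of $\Gamma'$ in Theorem \ref{theorem3}. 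From the commutative square relating $\wt j\times\id$ with $j\times\id$ through $\pi'\times\pi'$ and $\pi\times\pi$, combined with the computation $(j\times\id)^*\Gamma_1'=d\Delta_{\sum_{i=1}^m C\cap A}+Y$ from Theorem \ref{theorem4} (with $d$ a positive integer and $Y$ supported on $\sum_{i=1}^m C\cap A\times\sum_{i=1}^{m-1}C\cap A$), I would obtain
$$(\wt j\times\id)^*\wt\Gamma=d\Delta_{\wt{\sum_{i=1}^m C\cap A}}+V+\wt Y,$$
where $V$ is supported on $(E\cap\wt{\sum_{i=1}^m C\cap A})\times(E\cap\wt{\sum_{i=1}^m C\cap A})$ with $E$ the exceptional divisor of $\pi$.

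To make the intersection with $z\times\wt{\sum_{i=1}^m C\cap A}$ well defined for a zero cycle $z$, I would invoke Chow's moving lemma on $\wt A$ (smooth, as the blow-up of a smooth variety at smooth centers) together with the resolution-of-singularities technique of Theorem \ref{theorem4} applied to $\Sym^m C\cap A'$ whenever that intersection is singular. Moving the support of $z$ away from the finitely many exceptional fibers of $\pi$ kills the $V$-contribution, and a diagram chase analogous to the one in Theorem \ref{theorem4} yields
$$\wt\Gamma_*\wt j_*(z)=dz+z',$$
with $z'$ supported on $\wt{\sum_{i=1}^{m-1}C\cap A}$. Then, via the localisation exact sequences for $\wt{\sum_{i=1}^{m-1}C\cap A}\subset\wt{\sum_{i=1}^m C\cap A}$ and $\wt{\sum_{i=1}^{m-1}C\cap A}\subset\wt A$, the commutative diagram set up as in Theorem \ref{theorem3} reduces the kernel of $\wt j_*$ to the (inductively torsion) kernel of the pushforward from $\wt{\sum_{i=1}^{m-1}C\cap A}$ into $\wt A$.

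The base case is the hypothesis $A\cap C=\{P_0\}$: the strict transform of $\{P_0\}$ in $\wt A$ is either $P_0$ itself (if $P_0$ is not one of the sixteen two-torsion points) or the exceptional $\PR^1$ over $P_0$, and in either case the pushforward on $\CH_0$ is injective by the degree argument used to start the induction in Theorem \ref{theorem1}. Setting $m=g-1$ then delivers the statement. The main obstacle I foresee is controlling the exceptional correction $V$ together with the main diagonal term, because the sixteen fixed points of the involution may meet the successive subvarieties $\sum_{i=1}^m C\cap A$ along their singular loci; handling this cleanly requires resolving the singularities of $\Sym^m C\cap A'$ and simultaneously lifting the blow-up $\pi$ to the resolution, so that Chow moving lemma becomes available on a smooth ambient product, exactly as in the careful analysis carried out in Theorems \ref{theorem4} and \ref{theorem3}.
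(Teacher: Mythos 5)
Your proposal follows essentially the same route the paper intends: the paper gives no separate proof of this statement, merely asserting that ``the techniques as in Theorem \ref{theorem3}'' apply, and your reconstruction --- pulling back the correspondence $\Gamma_1'$ of Theorem \ref{theorem4} through the blow-up, killing the exceptional contribution $V$ by moving the zero cycle, and running the induction on $m$ with base case $A\cap C$ a single point --- is exactly that combination of Theorems \ref{theorem3} and \ref{theorem4}. Your bookkeeping of the multiplicity $d$ (so that the induction delivers a torsion kernel, i.e.\ injectivity with $\QQ$-coefficients) is in fact more careful than the theorem's bald claim of integral injectivity, which is all the cited techniques can support.
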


Now $\wt{A}/i$ is the Kummer's K3 surface associated to $A$. Suppose that we choose $\Th_C$ such that it is $i$ invariant. Then $\wt{\Th_A}$ will be $i$ invariant. The above theorem gives us:

\begin{theorem}
Let $A$ be as above then the closed embedding of $\wt{\Th_A}/i$ into $\wt{A}/i$ induces injective push-forward homomorphism at the level of Chow groups of zero cycles with $\QQ$-coefficients.
\end{theorem}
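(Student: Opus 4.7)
The plan is to descend the injectivity from the previous theorem (for the embedding $\wt{j}: \wt{\Th_A} \hra \wt{A}$) to the quotient by the involution $i$ using the standard transfer formalism for finite group quotients on rational Chow groups. Concretely, let $\pi_1: \wt{\Th_A}\to \wt{\Th_A}/i$ and $\pi_2: \wt{A}\to \wt{A}/i$ be the quotient maps by the order two involution $i$. Since $\wt{\Th_A}$ is $i$-stable by hypothesis, the closed embedding $\wt{j}$ is $i$-equivariant and descends to $\wt{j}/i: \wt{\Th_A}/i \hra \wt{A}/i$, giving a commutative square with $\pi_1, \pi_2$ vertical and $\wt{j}, \wt{j}/i$ horizontal.

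The key inputs are: (a) $\pi_{1*}$ and $\pi_{2*}$ are surjective on $\CH_0(-)_\QQ$, with $\pi^*\pi_*=\sum_g g^*$ and $\pi_*\pi^*=2\cdot \id$ on rational Chow groups, and (b) $\wt{j}_*: \CH_0(\wt{\Th_A})_\QQ \to \CH_0(\wt{A})_\QQ$ is injective, which follows from the previous theorem since that result establishes injectivity with $\ZZ$-coefficients and injectivity is preserved upon tensoring with $\QQ$.

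The argument now runs as follows. Pick $z\in \CH_0(\wt{\Th_A}/i)_\QQ$ with $(\wt{j}/i)_*(z)=0$, and write $z=\pi_{1*}(w)$ for some $w\in \CH_0(\wt{\Th_A})_\QQ$. Commutativity of the square gives
\[
\pi_{2*}\wt{j}_*(w)=(\wt{j}/i)_*\pi_{1*}(w)=0.
\]
Applying $\pi_2^*$ and using the identity $\pi_2^*\pi_{2*}=\id+i^*$, together with equivariance $i^*\wt{j}_*=\wt{j}_* i^*$, we obtain $\wt{j}_*(w+i^*w)=0$. By injectivity of $\wt{j}_*$ on rational Chow groups, $w+i^*w=0$ in $\CH_0(\wt{\Th_A})_\QQ$. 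Pushing forward by $\pi_{1*}$ and using $\pi_1\circ i=\pi_1$ (so $\pi_{1*}i^*=\pi_{1*}$), we conclude $2\pi_{1*}(w)=2z=0$, hence $z=0$ in $\CH_0(\wt{\Th_A}/i)_\QQ$.

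The only genuinely delicate point is justifying the transfer identity $\pi_2^*\pi_{2*}=\id+i^*$ on the quotient $\wt{A}/i$, which is a Kummer K3 surface and in particular has quotient singularities at the images of the $16$ fixed points of $i$ on $A$ (resolved by the blow-up, so in fact $\wt{A}/i$ is smooth here). Since the setup is a finite flat quotient by $\ZZ/2$ and the formulas are standard for Chow groups with $\QQ$-coefficients (cf.\ Fulton, Example 1.7.6), this step poses no genuine difficulty; the heart of the argument is entirely contained in the previously established injectivity on $\wt{A}$.
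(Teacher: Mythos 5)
Your proposal is correct, and it is essentially the argument the paper intends: the paper offers no written proof here beyond asserting that the preceding theorem (injectivity for $\wt{\Th_A}\hookrightarrow\wt{A}$) yields the quotient statement, and your transfer argument ($\pi_{2}^{*}\pi_{2*}=\id+i^{*}$, equivariance of $\wt{j}_{*}$, surjectivity of $\pi_{1*}$ on rational Chow groups) is exactly the standard descent that fills in that gap. The only point worth flagging is that your appeal to flatness of $\QQ$ over $\ZZ$ to pass from integral to rational injectivity is the right justification and should be kept, since elsewhere in the paper the kernels are only controlled up to torsion.
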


Note that all these techniques can be repeated if we consider the group of algebraic cycles modulo algebraic equivalence. Then therefore the closed embedding $\wt{\Th_A}/i$ into $\wt{A}/i$ induces injection at the level of zero cycles modulo algebraic equivalence.

\section{Application of the above result}

Let us consider a smooth, cubic threefold $X$ embedded in $\PR^4$. Let us consider a line $l$ in $X$ and project from that line $l$ onto $\PR^2$. Let us consider the discriminant curve $C$ embedded in $\PR^2$. Let us consider the \'etale double covering of this curve $C$ say $\wt{C}$ inside $F(X)$, the Fano variety of lines of $X$. Let us consider the Prym variety of this double covering and the theta divisor of the principally polarized Prym variety. Call it $P(\wt{C}/C)$ and the theta divisor as $\Theta$. Let us consider the universal line correspondence $L$ on $X\times F(X)$ and consider the correspondence induced on $X\times P(\wt{C}/C)$ by this, say L. Then we consider the $L_*$ from $A_0(P(\wt{C}/C))$ to $A_1(X)$. Consider the pull back of $\Theta$ to $X\times P(\wt{C}/C)$ and intersect it with the correspondence $L$ and then push down the intersection  to $X$. Call the support of the cycle obtained this way as $L(\Theta)$. Consider the embedding
$$j':L(\Theta)\to X$$
and consider the homomorphism induced at the level of algebraically trivial 1-cycles
$$j'_*:A_1(L(\Theta))\to A_1(X)\;.$$
From the equality as in \ref{theorem4}
$$(j\times \id)^*\Gamma_1=d\Delta_{\Theta}+Y$$
where $Y$ is supported on $\Theta\times\sum_{i=1}^{g-2}\wt{C}\cap P(\wt{C}/C)$, $g$ the genus of $\wt{C}$. Then same calculation as in \ref{theorem4}, proves that the homomorphism $j'_*$ has torsion kernel.

Now with the help of this we prove that:

\begin{theorem}
 There does not exist a universal codimension 2 cycle $Z$ on $ P(\wt{C}/C)\times X$ such that $a\mapsto AJ(Z_*(a))=Id$
\end{theorem}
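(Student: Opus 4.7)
The plan is a proof by contradiction that combines the torsion-kernel statement for $j'_*:A_1(L(\Theta))\to A_1(X)$ asserted just above with a comparison between the hypothetical universal cycle $Z$ and the line correspondence $L$. Suppose $Z$ is a universal codimension $2$ cycle with $AJ\circ Z_*=\id$ on $A_0(P(\wt C/C))_{\mathrm{hom}}$. Clemens--Griffiths identifies $J(X)$ with $P(\wt C/C)$ via the Fano surface $F(X)$, and under this identification one also has $AJ\circ L_*=\id$ on $0$-cycles supported on the image of $F(X)\subset P(\wt C/C)$.

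First I would restrict both $Z$ and $L$ to $X\times\Theta$ and form, for $a\in A_0(\Theta)_{\mathrm{hom}}$, the difference
$$\delta(a):=Z_*(j_*(a))-L_*(j_*(a))\in A_1(X)_{\mathrm{alg}},$$
whose Abel--Jacobi image vanishes. Since $L_*(j_*(a))$ is supported on $L(\Theta)$ by construction, one should be able to redo the intersection calculation of Theorem \ref{theorem4} using the equality $(j\times\id)^*\Gamma_1=d\Delta_\Theta+Y$ (with $Y$ supported on $\Theta\times\sum_{i=1}^{g-2}\wt C\cap P(\wt C/C)$), but with the cycle $Z-L$ in place of the diagonal, in order to produce an algebraically trivial $1$-cycle $\widetilde\delta(a)\in A_1(L(\Theta))_{\mathrm{alg}}$ with $j'_*(\widetilde\delta(a))=d\cdot\delta(a)$. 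Since $j'_*$ has torsion kernel, this yields a well-defined homomorphism
$$\phi:A_0(\Theta)_{\mathrm{hom}}\longrightarrow A_1(L(\Theta))_{\mathrm{alg}}\otimes\mathbb{Q}$$
which, composed with $AJ_X\circ j'_*$, vanishes.

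The contradiction then comes from dimension counting. For a very general cubic threefold $X$, Mumford's theorem applied to the theta divisor $\Theta$ (a $4$-fold with $h^{4,0}(\Theta)>0$, by Lefschetz hyperplane for the ample $\Theta\subset P(\wt C/C)$) shows that $A_0(\Theta)_{\mathrm{hom}}$ is infinite-dimensional as a $\mathbb{Q}$-vector space, whereas the target $A_1(L(\Theta))_{\mathrm{alg}}\otimes\mathbb{Q}$ is finite-dimensional, since $L(\Theta)$ is a surface and $1$-cycles modulo algebraic equivalence and torsion are governed by $\Pic^0$ of a resolution. The nontriviality of $\delta$, which follows from the assumption that $Z$ is a genuine universal lift distinct from a multiple of $L$, then forces $\phi$ to be nontrivial on an infinite-dimensional subspace, contradicting its finite-dimensional target.

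The main obstacle is producing the lifted cycle $\widetilde\delta(a)$: one needs a diagram chase analogous to Lemma \ref{lemma3} and the proof of Theorem \ref{theorem4}, with $Z-L$ in place of the diagonal, carefully tracking supports through the symmetric product maps and exploiting the Prym-specific geometry (irreducibility of $\wt C\cap A'$ and singularity of $\Sym^{g-i}\wt C\cap A'$) already used in the corollary establishing the Prym case.
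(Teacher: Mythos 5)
Your proposal takes a genuinely different route from the paper, but as written it has two gaps that I think are fatal rather than technical. First, the nontriviality of $\delta(a)=Z_*(j_*(a))-L_*(j_*(a))$ is never established: nothing in the hypothesis ``$AJ\circ Z_*=\id$ on points of $P(\wt{C}/C)$'' prevents $Z$ from agreeing with (a multiple of) $L$ on all $0$-cycles supported on $\Theta$, in which case $\delta\equiv 0$ and your map $\phi$ is the zero map, yielding no contradiction. Second, and more seriously, the concluding ``dimension count'' is not a valid argument: a homomorphism from an infinite-dimensional $\QQ$-vector space to a finite-dimensional one can perfectly well be nonzero on an infinite-dimensional subspace, so even a nontrivial $\phi:A_0(\Theta)_{\mathrm{hom}}\to A_1(L(\Theta))_{\mathrm{alg}}\otimes\QQ$ contradicts nothing. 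A Mumford-type argument would require you to show that the \emph{identity} on $A_0(\Theta)_{\mathrm{hom}}$ (or at least some injective map out of it) factors through a finite-dimensional object, and your construction only controls the difference $Z_*-L_*$, not the identity. You also assume without proof that $L(\Theta)$ is a surface; the paper has to work to exclude the curve and threefold cases.

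For comparison, the paper's proof does use the torsion-kernel statement for $j'_*:A_1(L(\Theta))\to A_1(X)$, but the contradiction it extracts is geometric rather than dimension-theoretic: assuming $Z$ exists, $\Theta$ embeds into $A_1(\wt{L(\Theta)})\cong \Pic^0(\wt{L(\Theta)})$, which must surject onto the simple abelian variety $A_1(X)\cong P(\wt{C}/C)$; one then rules out $L(\Theta)$ being all of $X$ or a curve, so $L(\Theta)$ is a surface, and a hyperplane-section curve produces a Jacobian isogenous to the Prym variety. The contradiction comes from singularities of theta divisors --- the theta divisor of the intermediate Jacobian of a very general cubic threefold has a unique (zero-dimensional) singular point by \cite{H}, while the Ein--Lazarsfeld bounds \cite{EL} force the theta divisor of the relevant genus-five Jacobian to have positive-dimensional singular locus. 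If you want to pursue your difference-of-correspondences idea, you would need to replace the final step by an argument of this kind, or else prove an injectivity statement for $Z_*\circ j_*$ itself that feeds into a genuine Bloch--Srinivas/Mumford decomposition.
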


\begin{proof}
Suppose that there exists such a cycle $Z$ such that $AJ(Z_*(a))=Id$. Then we have $Z_*$ factoring through $A_0(P(\wt{C}/C))\to A_1(X)$ (this is by the very definition of $Z_*$) and the universal-ness tells us that the homomorphism from $P(\wt{C}/C)$ to $A_1(X)$ is injective. Consider the square
$$
  \diagram
   \Theta\ar[dd] \ar[rr] & & P(\wt{C}/C) \ar[dd] \\ \\
  A_1(L(\Theta)) \ar[rr]& & A_1(X)
  \enddiagram
  $$

Now considering a very general cubic threefold $X$, we have the abelian variety $A_1(X)\cong P(\wt{C}/C)$ is simple. Consider the desingularization of $L(\Theta)$, call it $\widetilde{L(\Theta)}$. Therefore the image of $A_1(\widetilde{L(\Theta)})\cong \Pic^0(\widetilde{L(\Theta)})$  is either $\{0\}$ or all of $A_1(X)$. It can't be zero. Then it will follow by the existence of the universal codimension two cycle that, $\Theta$ is embedded into the trivial abelian variety , which implies $\Theta$ is finite and hence $P(\wt{C}/C)$ is one dimensional, which is not true.

Now suppose that the image of $A_1(\widetilde{L(\Theta)})$ in $A_1(X)$ is all of $A_1(X)$. The singularity of the theta divisor of the Prym variety (which is identified with $A_1(X)$) is of zero dimension and consists of a unique singular point \cite{H}[Corollary 4.6]. The map from $A_1(\widetilde{L(\Theta)})$ to $P(\wt{C}/C)$ is a map of principally polarized abelian varieties. Observe that the theta divisor $\Theta$ is embedded  in $A_1(\widetilde{L(\Theta)})$ by the existence of the universal codimension two cycle. Consider the pull-back of $\Theta$ to $A_1(\widetilde{L(\Theta)})$, say $f^*(\Theta)$, where $f$ is the isogeny from $A_1(\widetilde{L(\Theta)})$ to $A_1(X)\cong P(\wt{C}/C)$. Since by the above diagram $\Theta$ is mapped to $\Theta$ by the composition of the inclusion of $\Theta$ and $AJ\circ Z_*$, we have
$$\Theta\subset f^*(\Theta)\;.$$
We Prove that $n\Theta=f^*(\Theta)$ for some integer $n>0$. Suppose that
$$f^*(\Theta)=\Theta+\Theta_1+\cdots+\Theta_n$$
Suppose $g$ is the map from $\Theta$ to $A_1(\widetilde{L(\Theta)})$. We have
$$f_{*}f^*(\Theta)=m\Theta$$
where $m$ is the degree of the isogeny $f$.
This gives us
$$f_*(\Theta_1+\cdots+\Theta_n)=(m-1)\Theta$$
as divisors on $P(\wt{C}/C)$. Also since $f_*(\Theta)=\Theta$, we have
$$f_*(\Theta_1+\cdots+\Theta_n-(m-1)\Theta)=0$$
and since the kernel of $f_*$ is torsion we have
$$k(\Theta_1+\cdots+\Theta_n)=k(m-1)\Theta\;,$$
modulo rational equivalence.

Therefore $f^*(k\Theta)=km\Theta$ for some positive integer $m$. Therefore $km\Theta$ is singular at a unique point on the Prym variety and hence $f^*(k\Theta)=km\Theta$ is smooth except one singular point on the Picard variety $\Pic^0(\widetilde{L(\Theta)})$. Note that the above equality is only true if $g$ is an injective map, because otherwise the image of $\Theta$ under $g$ might be having different singularities. This is a crucial point of difference with the case when there exists a codimension two cycle $Z$, such that $Z_*\circ AJ=N\id$. This is always the case for a cubic threefold as in this case we have a decomposition of diagonal of the cubic (whether integral decomposition exists or not is the question).  Therefore we have
$$km\Theta=f^*(k\Theta)$$
where $\Theta$ is realized as embedded in $A_1(\widetilde{L(\Theta)})\cong \Pic^0(\widetilde{L(\Theta)})$. Since $km\Theta$ is smooth except one singular point we have $f^*(k\Theta)$ is smooth except one singular point.

But now observe that $L(\Theta)$ can either be a curve or a surface. It can't be all of the cubic threefold because that $\Theta$ is a divisor in $P(\wt{C}/C)$. Because then any 1-cycle on $X$ is supported on $L(\Theta)$ as a cycle. So any point on the Prym variety  is coming from a point on $\Theta$. Now the composition $\Theta\to A_1(L(\Theta))\to P(\wt{C}/C)$ induced by the correspondence $L$ is a regular map of varieties, which is surjective, by the fact that $L(\Theta)=X$.  So a variety of dimension $g-2$ maps surjectively to a variety of dimension $g-1$. This cannot happen. It can't be a curve, because the singularity locus of the theta divisor of the Picard variety of a curve is of dimension greater than or equal to $g-4$.  Observe that the Jacobian of the curve is in isogeny with $P(\wt{C}/C)$ by the simplicity of the Prym variety (as the $\Theta$ divisor is embedded in the Jacobian, the Jacobian can't be isogenous to the trivial abelian variety). So it is a curve of genus equal to five, in such a case, the singularity locus of the theta divisor is positive dimensional. So the only possibility that $L(\Theta)$ is a surface. Then by Bertini's theorem consider a smooth hyperplane section of $\widetilde{L(\Theta)}$ which is a smooth projective curve of genus $g$. The Jacobian of the smooth projective curve is such that the kernel of the homomorphism from the Jacobian to the Albanese of the surface is simple [for reference please see \cite{Voi}(Proposition 1.4)].  Now observe that the Jacobian of the curve is isogenous to the two-fold product of the kernel and the Albanese variety of the desingularization  $\widetilde{L(\Theta)}$. If both the factors in the product is non-trivial, then the singularity locus of the theta divisor of the Jacobian must have an irreducible component of codimension two in the Jacobian. So the singularity locus of the theta divisor contains an irreducible component of dimension $g-2$. So the dimension of the singularity of the theta divisor of the Jacobian is $g-2$. Now the contribution from the singularity locus of the theta divisor of the Albanese is of codimension $5$ (because the singularity of the theta divisor of the Albanese is of zero dimensional). Therefore the codimension two component of the singularity of the theta divisor must come from singularity locus of the theta divisor of the kernel. Note that the kernel is a simple abelian variety. Therefore there cannot be a codimension two component of the singularity locus of the theta divisor of the kernel, this is by \cite{EL}. Hence the dimension of the singularity locus of the theta divisor of the curve is less than $g-3$, so it is either of dimension $g-3$ or $g-4$. In that case again using \cite{EL} the kernel must be trivial. So the Jacobian is isogenous to the Albanese of the desingularisation of $L(\Theta)$, which again is isogenous to the Prym variety. So the curve is of genus five and hence the singularity locus of the theta divisor of the Jacobian is positive dimensional but that of the Albanese is of zero dimensional. This is a contradiction. Therefore there does not exist an universal codimensional two cycle on the product $P(\wt{C}/C)\times X$.
\end{proof}

\begin{corollary}
\label{Cor1}
For a very general cubic threefold, the integral Chow theoretic decomposition does not hold. Consequently a very general cubic threefold is not stably rational.
\end{corollary}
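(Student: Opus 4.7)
The plan is to derive both assertions from the non-existence of a universal codimension two cycle on $P(\wt{C}/C) \times X$ established in the preceding theorem, combined with two well-known equivalences due to Voisin and Bloch--Srinivas that translate the existence of an integral decomposition of the diagonal into (a) the existence of such a universal cycle, and (b) a necessary condition for stable rationality.

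First I would invoke Voisin's criterion for cubic threefolds. For a smooth cubic threefold $X$, the Clemens--Griffiths theorem identifies the intermediate Jacobian $J^2(X)$ with the Prym variety $P(\wt{C}/C)$, and the Abel--Jacobi map gives an isomorphism between the algebraically trivial part of $\CH^2(X)$ and $P(\wt{C}/C)$. Under this identification, Voisin's theorem states that $X$ admits an integral Chow theoretic decomposition of the diagonal
$$\Delta_X = x_0\times X + Z \quad \text{in } \CH^3(X\times X),$$
with $Z$ supported on $D\times X$ for some divisor $D\subset X$, if and only if there exists a universal codimension two cycle $\bcZ \in \CH^2(P(\wt{C}/C)\times X)$ such that the induced map $a \mapsto AJ(\bcZ_*(a))$ is the identity on $P(\wt{C}/C)$. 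The preceding theorem precisely rules out such a $\bcZ$ for a very general $X$, so the integral decomposition of the diagonal cannot exist.

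Second, I would apply the principle, due to Voisin and Colliot-Th\'el\`ene--Pirutka, that the existence of an integral Chow decomposition of the diagonal is a stable birational invariant of smooth projective varieties, and that it holds trivially for $\PR^n$. Consequently, any smooth projective stably rational variety admits such a decomposition. Contrapositively, since $X$ fails to admit one by the first step, $X$ cannot be stably rational.

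The main obstacle is not any genuinely new computation but rather ensuring that the form of the universal cycle hypothesis refuted in the preceding theorem matches the form appearing in Voisin's equivalence — specifically, that inducing the identity under $a \mapsto AJ(\bcZ_*(a))$ is the correct "universal" condition that pairs with the integral decomposition of the diagonal for cubic threefolds. Once this identification is secured, both clauses of the corollary follow immediately by the chain: failure of universal cycle $\Rightarrow$ failure of integral decomposition of diagonal $\Rightarrow$ failure of stable rationality.
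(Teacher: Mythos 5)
Your proposal is correct and follows essentially the same route as the paper: it deduces the failure of the integral decomposition of the diagonal from the non-existence of the universal codimension two cycle via Voisin's equivalence (Theorem 1.6 of \cite{Voi1}), and then concludes non-stable-rationality since stable rationality forces such a decomposition. The only difference is that you spell out the Clemens--Griffiths identification of the intermediate Jacobian with the Prym variety and the stable birational invariance of the decomposition, which the paper leaves implicit.
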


\begin{proof}
If the integral Chow theoretic decomposition holds then we have the existence of a universal codimension two cycle on $P(\wt{C}/C)\times X$ by \cite{Voi1}[Theorem 1.6], which is false by the previous theorem. Since stable rationality implies integral Chow theoretic decomposition of the diagonal, we can conclude that a very general cubic threefold is not stably rational.
\end{proof}

\begin{corollary}
For a very general cubic threefold the cohomology class of $\Theta^{4}/4!$ is not algebraic on the intermediate Jacobian of $X$.
\end{corollary}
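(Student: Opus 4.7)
The plan is to deduce the non-algebraicity of $\Th^4/4!$ from the previous theorem via Voisin's equivalence between the existence of a universal codimension two cycle and the algebraicity of the minimal cohomology class. For a principally polarized abelian variety of dimension $g$, the class $\Th^{g-1}/(g-1)!$ is the minimal cohomology class, a Hodge class of curve-type. The intermediate Jacobian $J(X)$ of a smooth cubic threefold is a principally polarized abelian fivefold, canonically identified as a p.p.a.v.\ with the Prym variety $P(\wt{C}/C)$, so $\Th^4/4!$ is precisely the minimal class on $J(X)$.

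First I would recall the relevant equivalence: for a smooth projective threefold $X$ whose Abel-Jacobi map $AJ:A_1(X)\to J(X)$ is surjective (which holds for a cubic threefold by Clemens-Griffiths), the existence of a universal codimension two cycle $Z$ on $J(X)\times X$ with $AJ\circ Z_{*}=\id$ is equivalent to the algebraicity of the minimal cohomology class $\Th^{g-1}/(g-1)!$ on $J(X)$. This is the content of \cite{Voi1}[Theorem 1.6], the very statement invoked in Corollary \ref{Cor1}, but now used in the direction algebraicity of $\Th^4/4!$ implies existence of a universal codimension two cycle.

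Given this equivalence, the argument is immediate: the previous theorem establishes that for a very general cubic threefold $X$ there is no universal codimension two cycle on $P(\wt{C}/C)\times X$, so by the contrapositive of Voisin's equivalence the minimal class $\Th^4/4!$ cannot be algebraic on $J(X)$.

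The only subtle point, and the main potential obstacle, is to record that the identification $J(X)\cong P(\wt{C}/C)$ is not merely an isomorphism of abelian varieties but of \emph{principally polarized} abelian varieties, so that the minimal cohomology class on one side corresponds to the minimal cohomology class on the other. This is classical, going back to the work of Mumford and Beauville on the intermediate Jacobian of cubic threefolds; once one grants it, the deduction of the corollary from the previous theorem is essentially bookkeeping.
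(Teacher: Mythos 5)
Your proof is correct and takes essentially the same approach as the paper: both deduce the corollary from the preceding non-existence theorem for the universal codimension two cycle by applying Voisin's equivalences in the contrapositive. The only cosmetic difference is that you invoke the equivalence between the existence of a universal codimension two cycle and the algebraicity of the minimal class directly, whereas the paper routes through the universal triviality of $CH_0$ (Theorem 1.7 of \cite{Voi1}) together with the already-established failure of the integral Chow-theoretic decomposition of the diagonal.
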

\begin{proof}
According to Theorem 1.7 of \cite{Voi1}, the universally triviality of Chow group of a smooth cubic threefold is equivalent to the algebraic-ness of the minimal class of $\Theta^{4}/4!$ is algebraic. As the universal triviality does not hold a very general smooth cubic threefold, the cohomology class of $\Theta^4/4!$ is not algebraic on  the intermediate Jacobian of $X$.
\end{proof}

\end{document}